\documentclass[a4paper,12pt]{article}
\usepackage{amsmath}
\usepackage{amsthm}
\usepackage{graphicx}
\usepackage{verbatim}
\usepackage{epsfig}
\usepackage{hyperref}
\usepackage{float}
\usepackage{color}
\usepackage{fullpage}
\usepackage{amsfonts}
\usepackage{amscd}
\usepackage{amssymb}
\usepackage{graphics}
\usepackage{amsmath}
\usepackage[english]{babel}
\usepackage{bbm}
\usepackage{yfonts}
\usepackage{mathrsfs}
\usepackage{color}
\DeclareFontFamily{OML}{rsfs}{\skewchar\font'177}
\DeclareFontShape{OML}{rsfs}{m}{n}{ <5> <6> rsfs5 <7> <8> <9>
rsfs7 <10> <10.95> <12> <14.4> <17.28> <20.74> <24.88> rsfs10 }{}
\DeclareMathAlphabet{\mathfs}{OML}{rsfs}{m}{n}

\newcommand{\BE}{{\mathbb{E}}}

\newcommand{\BP}{{\mathbb{P}}}

\newcommand{\BR}{{\mathbb{R}}}

\newcommand{\BZ}{{\mathbb{Z}}}

\newcommand{\CA}{{\mathcal{A}}}

\newcommand{\CC}{{\mathcal{C}}}
\renewcommand{\CD}{{\mathcal{D}}}
\newcommand{\CE}{{\mathcal{E}}}

\newcommand{\CL}{{\mathcal{L}}}

\newcommand{\CP}{{\mathcal{P}}}

\newcommand{\CS}{{\mathcal{S}}}

\newcommand{\bae}{\begin{equation}\begin{aligned}}
\newcommand{\eae}{\end{aligned}\end{equation}}

\newcommand{\cc}{{\mathfrak c}}
\newcommand{\diam}{{\rm diam}}

\newtheorem{thm}{Theorem}[section]
\newtheorem{prop}[thm]{Proposition}
\newtheorem{lem}[thm]{Lemma}

\newtheorem{cor}[thm]{Corollary}

\begin{document}
\numberwithin{equation}{section} \numberwithin{figure}{section}
\title{Connectedness of Poisson cylinders in Euclidean space}
\author{Erik I. Broman\footnote{Department of Mathematics, Uppsala University, Sweden. E-mail: broman@math.uu.se} 
and Johan Tykesson\footnote{Department of Mathematics, Uppsala University, Sweden. E-mail: johan.tykesson@gmail.com}}
\maketitle
\begin{abstract}
We consider the Poisson cylinder model in ${\mathbb R}^d$, $d\ge 3$. We show that given any two
cylinders ${\mathfrak c}_1$ and ${\mathfrak c}_2$ in the process, there is a sequence of at most
$d-2$ other cylinders creating a connection between ${\mathfrak c}_1$ and ${\mathfrak c}_2$. In
particular, this shows that the union of the cylinders is a connected set, answering a question
appearing in \cite{TW}. We also show that there are cylinders in the process that are not connected
by a sequence of at most $d-3$ other cylinders. Thus, the diameter of the cluster of cylinders equals $d-2$.
\end{abstract}
\section{Introduction}\label{s.intro}

This paper is devoted to the study of the geometry of a random collection of bi-infinite cylinders
in ${\mathbb R}^d$, $d\ge 3$. Before we give the precise definition of this model in
Section~\ref{s.notation}, we describe it informally.

We start with a homogenous Poisson line process $\omega$ of intensity $u\in(0,\infty)$ in ${\mathbb R}^d$.
As the parameter $u$ will play a very little role in this paper, we will
denote its associated probability measure by ${\mathbb P}$ and keep the dependence on $u$ implicit.
Around each line $L\in \omega$, we then center a bi-infinite cylinder $\cc(L)$ of base-radius $1$.
We will sometimes abuse notation and say that $\cc(L)\in \omega.$
The union over $\omega$ of all
cylinders is a random subset of ${\mathbb R}^d$ and we call it ${\mathcal C}$. We think of
${\mathcal C}$ as the \emph{covered region} and its complement
${\mathcal V}:={\mathbb R}^d\setminus {\mathcal C}$ as the \emph{vacant region}. 
We will refer to this model as the Poisson cylinder model, and before we move on to describe our results,
we will discuss some previous results. The model was first suggested by I. Benjamini to the second author
\cite{IB} and subsequently studied in \cite{TW}. In~\cite{TW}, the focus was on the existence of a
non-degenerate percolative phase transition in ${\mathcal V}$ (see \cite{M} for a general text
on continuum percolation models). Indeed, letting
\[
u_*(d):=\sup\{u\,:\,{\mathcal V}\mbox{ has infinite connected components a.s.}\},
\]
it was proved that $0<u_*(d)<\infty$ for every $d\geq 4,$ and that $u_*(3)<\infty.$
Later, it was proved in~\cite{HST} that $u_*(3)>0.$

In the present paper, we focus on connectivity properties of ${\mathcal C}$. To that end,
for any $\cc_a,\cc_b\in \omega$ we let the {\em cylinder distance} ${\rm Cdist}(\cc_a,\cc_b)$
be the minimal number $k$ such that there exists cylinders $\cc_1,\ldots\cc_k\in \omega$ so that
\[
\cc_a\cup\cc_b \bigcup_{i=1}^k\cc_k,
\]
is a connected set. We then define the {\em diameter} of $\CC$ as
\[
\diam(\CC)=\sup\{{\rm Cdist}(\cc_a,\cc_b):\cc_a,\cc_b\in \omega\}.
\]
Our main result is as follows.
\begin{thm}\label{t.maind}
For any $d\geq 3,$
\[
{\mathbb P}[\diam(\CC)=d-2]=1.
\]
\end{thm}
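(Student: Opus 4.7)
The theorem has two parts: the upper bound $\diam(\CC)\le d-2$ a.s., and the matching lower bound asserting that cylinders at cylinder-distance at least $d-2$ exist in $\omega$ a.s. I would prove the upper bound by induction on $d$, with a geometric base case at $d=3$ and a slab-reduction inductive step; the lower bound is a dimension count combined with ergodicity.

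\textbf{Upper bound.} For the base case $d=3$, the claim is that any $\cc_a,\cc_b\in\omega$ a.s.\ have a common Poisson neighbor, so that ${\rm Cdist}(\cc_a,\cc_b)\le 1$. By Slivnyak's theorem, conditional on $L_a,L_b\in\omega$ the remaining process is still Poisson of intensity $u$ on the space of lines, so it suffices to verify that the invariant measure of
\[
A(L_a,L_b):=\{L\subset\BR^3 : \mathrm{dist}(L,L_a)\le 2,\ \mathrm{dist}(L,L_b)\le 2\}
\]
is infinite. Parameterizing lines by direction $v\in S^{2}$ and offset in $v^\perp$, the admissible offsets for fixed $v$ form the intersection of two bands in the plane $v^\perp$, and a Fubini integration over $v$ --- with divergence coming from $v$ close to the axes' directions --- yields the claim. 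A countable union bound over $\{\cc_a,\cc_b\}\subset\omega$ then gives the base case. For the inductive step in dimension $d\ge 4$, given $\cc_a,\cc_b\in\omega$ I would choose (using Palm calculus) one Poisson cylinder $\cc_1\in\omega$ meeting $\cc_a$ whose axis, together with that of $\cc_b$, spans a $(d-1)$-dimensional affine subspace $H$. Restricting $\omega$ to cylinders whose axes lie within a slab of small fixed thickness around $H$ and projecting onto $H$ yields a Poisson line process in $H\cong\BR^{d-1}$ of positive intensity, and the induction hypothesis supplies a chain of length at most $d-3$ in this sub-process from $\cc_1$ to $\cc_b$, lifting to a chain of length at most $d-2$ in the original process.

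\textbf{Lower bound.} I would exhibit almost surely a pair $\cc_a,\cc_b\in\omega$ admitting no connecting chain of length strictly less than $d-2$. The argument is a dimension count: for $k\le d-3$ and $\cc_a,\cc_b$ in sufficiently generic position, the invariant measure of $k$-tuples of lines $(L^{(1)},\ldots,L^{(k)})$ satisfying the consecutive proximity constraints required for a chain $\cc_a\to\cc^{(1)}\to\cdots\to\cc^{(k)}\to\cc_b$ is finite, so for any such pair the probability that $\omega$ contains such a $k$-chain is strictly less than one. Translation invariance and ergodicity of $\omega$ produce infinitely many approximately independent attempts at such pairs across space, so almost surely some pair is not connected by any chain of length at most $d-3$.

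\textbf{Main obstacle.} The crucial difficulty is the inductive reduction of the upper bound. Projecting or slicing $\omega$ does not directly preserve the Poisson cylinder structure in lower dimension; the natural approach is the restriction-and-projection to a slab around $H$, but verifying that the resulting projected process has enough independence of the choice of $\cc_1$ to invoke the induction hypothesis on the pair $(\cc_1,\cc_b)$ in $\BR^{d-1}$ requires careful Palm-theoretic bookkeeping.
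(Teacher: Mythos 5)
Your base case $d=3$ is essentially the paper's own argument (showing the invariant measure of lines meeting two given cylinders is infinite, then Slivnyak/Mecke to cover all pairs in $\omega$), and your lower bound is in the right spirit, though it silently assumes the key quantitative fact: that the measure of admissible $k$-tuples is finite and, more to the point, that one can extract from this an almost-sure existence statement. The paper does this via an explicit convolution estimate $\sum_{z_1,\dots,z_{n-1}}\prod\min(1,|z_i-z_{i+1}|^{-(d-1)})\le c|z_0-z_n|^{-(d-n)}$, giving ${\mathbb P}[\tilde A_n(x,y)]\le c|x-y|^{-(d-n)}\to 0$, which is what makes the union bound over well-separated candidate pairs close; ``probability $<1$ for each fixed pair plus ergodicity'' is not enough as stated, because the count of $k$-chains for $k\ge 2$ is not Poisson and finiteness of its mean does not by itself give positive probability of absence.

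The genuine gap is the inductive step for the upper bound, which fails for two separate reasons. First, two lines in ${\mathbb R}^d$ span an affine subspace of dimension at most $3$, so for $d\ge 5$ there is no $(d-1)$-dimensional affine subspace $H$ spanned by the axes of $\cc_1$ and $\cc_b$; the construction of $H$ does not exist. Second, and fatally even for $d=4$: a bi-infinite line lies inside a slab of finite thickness around a hyperplane $H$ if and only if its direction is parallel to $H$, and the set of such directions is a lower-dimensional (hence $\nu_{d,1}$-null) subset of $G(d,1)$. Consequently the set of lines contained in the slab has $\mu_{d,1}$-measure zero, and your restricted process is almost surely empty --- there is no induced Poisson line process of positive intensity in $H$ to which the induction hypothesis could be applied. (Thickening to ``almost parallel'' directions does not rescue this: such lines eventually leave the slab, so one only gets a bounded-window approximation, whereas the $d=3$ base case draws its divergence precisely from lines arbitrarily far away and nearly parallel to the two cylinders.) The paper avoids any dimension reduction: the upper bound is proved directly by a second-moment argument counting cylinder-paths $\cc_1\leftrightarrow B_{1,m}\leftrightarrow\cdots\leftrightarrow B_{d-3,m}\leftrightarrow\cc_2$ through unit boxes inside $d-3$ large boxes placed in orthogonal directions, with the construction repeated over an infinite sequence of scales $m$ whose associated line-sets are shown to be disjoint (hence the scales are independent), so that Borel--Cantelli applies. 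That construction is the real content of the proof and has no substitute in your proposal.
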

\noindent
{\bf Remarks:} 
The proof in the case $d=3$ is very different from the proof in $d\geq 4,$ and therefore they
will be presented in different sections.

When $d=2,$ every line in a Poisson line process a.s.\
intersects every other line in the process, so that trivially $\BP[\diam(\CC)=0]=1.$

It is an easy consequence of scaling, that Theorem \ref{t.maind} holds also for cylinders with
radius different from 1. Considering a model with random radii, it will still be the case that a.s.
$\diam(\CC)\leq d-2$ (unless the distribution of the radii are degenerate). This follows from an easy
coupling argument.

It is interesting to note that if we define
\[
u_c(d):=\inf\{u: \exists \textrm{ a unique component of } \CC(u,\omega)
\textrm{ containing infinitely many cylinders a.s.} \}
\]
(which is very natural in the context of percolation models), then Theorem \ref{t.maind}
implies that $u_c(d)=0$ for every $d\geq 3.$ In fact, Theorem \ref{t.maind} tell us that the union of the 
cylinders consists solely of a unique, infinite, component.
This is in sharp contrast to similar results for other continuum percolation models 
as well as for discrete percolation models
(see for example \cite{M} and \cite{Grimmett}). In those settings, the phase transition 
is non-trivial in that the critical parameter value is strictly bounded away from 0.
However, proofs
of such results usually rely on some sort of local dependencies and/or so-called finite energy conditions.
Our case is quite different, since our model lack these features. Indeed,
the long range dependence in ${\mathcal C}$ and ${\mathcal V}$ manifests itself
in for example the following way (see \cite{TW} Equation (3.9)):
\begin{equation}\label{e.corrdec}
\frac{c(d,u)}{|x-y|^{d-1}}\le \text{cov}_u({\bf 1}\{x\in {\mathcal V}\},{\bf 1}\{y\in {\mathcal V}\})
\le \frac{c'(d,u)}{|x-y|^{d-1}}
\end{equation}
as soon as $|x-y|>2$, and for some constants $c(d,u),c'(d,u)\in(0,\infty)$ independent of $x,y$.
This long range dependence creates challenges in the study of ${\mathcal C}$ and ${\mathcal V}$
as techniques developed for percolation models exhibiting only bounded range dependence are often
not applicable.
In fact, the lack of the mentioned features is one of the main motivations
for studying the model. 

\bigskip

The study of these questions is partly inspired by some recent works on the
\emph{random interlacements process} on ${\mathbb Z}^d$, $d\ge 3$, introduced
in \cite{S1}. The random interlacement is a discrete percolation model obtained by a
Poissonian collection of bi-infinite random walk trajectories. It was shown in
\cite{RS} and \cite{PT} that given any two trajectories in the random interlacement, there
is some sequence of at most $\lceil d/2\rceil-2$ other trajectories connecting them.

In \cite{PT}, methods based on the concept of stochastic dimension from \cite{BKPS} were successfully used.
However, it turns out that those methods cannot be applied for the Poisson cylinder model. This is mainly
because the long range dependence in the Poisson cylinder model 
is of a different nature than the long range dependence in the random
interlacements model. For random interlacements, inequalities similar to~\eqref{e.corrdec} hold, but
with $d-1$ replaced by $d-2$.

In order to show our results we thus had to take other routes. The proof in the case $d=3$ relies on a projection
method combined with an integral formula from \cite{SW} to show that the number of lines intersecting any two
cylinders $\cc_a,\cc_b$ is a.s. infinite. When $d\geq 4,$ then to prove the lower bound of Theorem~\ref{t.maind},
we adapted a method from \cite{RS}. The proof of the upper bound of Theorem~\ref{t.maind} is a rather involved
use of the second moment method. In fact, the proof of this upper bound occupies more than half of the paper.

The rest of the paper is organized as follows. In Section~\ref{s.notation} we define the
Poisson cylinder model precisely. In Section~\ref{s.ri3d} we give the proof of Theorem~\ref{t.maind} for $d=3$.
Some preliminary measure estimates needed for the proof of Theorem~\ref{t.maind} when $d\geq 4,$
are given in Section~\ref{s.prelim}. Finally, the proofs of the lower and upper bounds of Theorem~\ref{t.maind}
are given in Section~\ref{s.lbound} and Section~\ref{s.maind} respectively.

\section{Notation and definitions}\label{s.notation}

We let $A(d,1)$ be the set of bi-infinite lines in ${\mathbb R}^d$. Let $G(d,1)$ be the set of
bi-infinite lines in ${\mathbb R}^d$ that pass through the origin. In other words, $A(d,1)$ is
the set of $1$-dimensional affine subspaces of ${\mathbb R}^d$, and $G(d,1)$ is the set of
$1$-dimensional linear subspaces of ${\mathbb R}^d$. Subsets of $G(d,1)$ and $A(d,1)$ will
typically be denoted by scripted letters like $\CA$ and $\CL$. If $K\in {\mathcal B}({\mathbb R}^d)$
we let $\CL_K\subset A(d,1)$ denote the set of lines that intersect $K$:
\[
\CL_K=\{L\in {A(d,1)}\,:\,L\cap K\neq \emptyset\}.
\]

Let $B_d(0,1)$ denote the $d$-dimensional ball of radius $1$ and let $\kappa_d$ denote the volume of $B_d(0,1)$.
On $G(d,1)$ there is a unique Haar measure $\nu_{d,1}$, normalized so that $\nu_{d,1}(G(d,1))=1$, and on $A(d,1)$,
there is a unique Haar measure $\mu_{d,1}$ normalized so that $\mu_{d,1}(\CL_{B^d(0,1)})=\kappa_{d-1}$
(see for instance \cite{SW} Chapter 13).
We let $SO_d$ be the rotation group on ${\mathbb R}^d$. Typically, we think of the elements of
$SO_d$ as the orthogonal
$d\times d$ matrices with determinant $1$. For any subspace
$H\subset \BR^d,$ and set $A\subset \BR^d,$ we let $\Pi_{H}(A)$
denote the projection of $A$ onto $H.$ In these cases, we will consider $\Pi_{H}(A)$ as a subset of $\BR^d.$
We will let $e_1,e_2,\ldots,e_d$
denote the generic orthonormal set of vectors that span $\BR^d.$

\subsection{The Poisson cylinder model}
We consider the
following space of point measures on $A(d,1)$:
\[
\Omega=\{\omega=\sum_{i=0}^{\infty}\delta_{L_i}\text{ where $L_i\in A(d,1)$, and }
\omega(\CL_A)<\infty\text{ for all compact }A\subset {\mathbb R}^d\}.
\]
Here, $\delta_L$ of course denotes point measure at $L.$

In what follows, we will often use the following standard abuse of notation: if $\omega$ is some point
measure, the expression $"x\in \omega"$ will stand for $"x\in \text{supp}(\omega)"$. If
$\omega\in \Omega$ and $A\in {\mathcal B}({\mathbb R}^d)$ we let $\omega_A$ denote the
restriction of $\omega$ to $\CL_A$.
We will draw an element $\omega$ from $\Omega$ according to a Poisson point process with intensity
measure $u\mu_{3,1}$ where $u>0$.  We call $\omega$ a \emph{(homogeneous) Poisson line process} of
intensity $u$.

If $L\in A(d,1)$, we denote by ${\mathfrak c}(L)$ the cylinder of base radius $1$ centered around $L$:
\[
{\mathfrak c}(L)=\{x\in {\mathbb R}^d\,:\,d(x,L)\le 1\}.
\]
Finally the object of main interest in this paper, the union of all cylinders is denoted by ${\mathcal C}$:
$${\mathcal C}={\mathcal C}(\omega)=\bigcup_{L\in \omega} {\mathfrak c}(L).$$

\section{Proof of Theorem~\ref{t.maind} when $d=3$}\label{s.ri3d}

The aim of this section is to prove the following theorem.
\begin{thm} \label{thm:maind=3}
For $d=3,$ 
\[
\BP(\diam(\CC)=1)=1.
\]
\end{thm}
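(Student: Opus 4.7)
The lower bound $\diam(\CC) \geq 1$ is easy: with positive probability $\omega$ contains two lines whose axes lie at distance greater than $2$, yielding disjoint cylinders, and by stationarity and ergodicity of the Poisson line process, almost surely such a pair exists.

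For the upper bound, I would show that for any two fixed distinct lines $L_a, L_b \in A(3,1)$ the set
$$
\CL_{a,b} := \{L \in A(3,1) : L \cap \mathfrak{c}(L_a) \neq \emptyset \text{ and } L \cap \mathfrak{c}(L_b) \neq \emptyset\}
$$
has infinite $\mu_{3,1}$-measure. Given this, the Slivnyak--Mecke formula asserts that conditionally on $L_a,L_b\in\omega$ the remaining points of $\omega$ form a Poisson process of the same intensity, so the number of other lines in $\omega\cap\CL_{a,b}$ is Poisson with infinite mean and hence a.s.\ infinite. Because $\omega$ has countably many points, a measurable enumeration of pairs together with a countable union bound upgrades this to: almost surely every pair of cylinders in $\omega$ is connected through a (in fact infinitely many) third cylinder, which yields $\diam(\CC)\leq 1$.

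The core computation is the measure estimate. I would use the direction-and-offset decomposition of $\mu_{3,1}$ from \cite{SW}, Chapter~13, together with the observation that a line $x+\theta$ ($x\in\theta^\perp$) meets $\mathfrak{c}(L_a)$ iff $x\in\Pi_{\theta^\perp}(\mathfrak{c}(L_a))$, to obtain
$$
\mu_{3,1}(\CL_{a,b}) = \int_{G(3,1)} \lambda_2\bigl(\Pi_{\theta^\perp}(\mathfrak{c}(L_a))\cap\Pi_{\theta^\perp}(\mathfrak{c}(L_b))\bigr)\,\nu_{3,1}(d\theta),
$$
where $\lambda_2$ denotes Lebesgue measure on the $2$-plane $\theta^\perp$. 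For $\nu_{3,1}$-a.e.\ $\theta$, each $\Pi_{\theta^\perp}(\mathfrak{c}(L_i))$ is an infinite strip of width $2$ about the projected axis, and when the two projected axes are non-parallel the intersection of the two strips is a parallelogram of area $4/|\sin\beta(\theta)|$, where $\beta(\theta)$ is the angle in $\theta^\perp$ between the two projected axes.

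The main obstacle is then the divergence: one must verify $\int_{G(3,1)} d\nu_{3,1}(\theta)/|\sin\beta(\theta)| = \infty$. A short calculation using $|v_1\times v_2|^2 = |v_1|^2|v_2|^2 - (v_1\cdot v_2)^2$ with $v_i = \Pi_{\theta^\perp}(e_i)$ gives the explicit formula
$$
\sin\beta(\theta) = \frac{|\sin\alpha|\,|\sin\gamma(\theta)|}{\sqrt{1-(e_a\cdot\theta)^2}\sqrt{1-(e_b\cdot\theta)^2}},
$$
where $e_a,e_b$ are the unit directions of $L_a,L_b$, $\alpha$ is the angle between them, and $\gamma(\theta)$ is the angle between $\theta$ and $V:=\mathrm{span}(e_a,e_b)$. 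Hence $\sin\beta$ vanishes precisely as $\theta$ approaches the great circle $V\cap S^2$ in $G(3,1)\cong S^2/\{\pm1\}$, and it does so linearly in $\gamma$; writing the surface integral in tubular coordinates around that great circle, the integrand behaves like $1/|\gamma|$ and yields the required logarithmic divergence. The degenerate parallel cases $e_a=\pm e_b$ can be disposed of separately by an even simpler direct argument. The hard part of the whole argument is precisely this borderline divergence: one cannot get away with a crude bound, so the explicit formula for $\beta(\theta)$ really must be computed.
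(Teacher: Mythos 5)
Your proposal is correct and follows essentially the same route as the paper: the decomposition $\mu_{3,1}(\CL_{\cc(L_a)}\cap\CL_{\cc(L_b)})=\int_{G(3,1)}\lambda_2\bigl(\Pi_{\theta^\perp}(\cc(L_a))\cap\Pi_{\theta^\perp}(\cc(L_b))\bigr)\,\nu_{3,1}(d\theta)$, the parallelogram area $4/\sin\beta(\theta)$, the divergence of the resulting angular integral, and the Slivnyak--Mecke upgrade from a fixed pair of lines to all pairs of cylinders in $\omega$. Your coordinate-free evaluation of $\sin\beta(\theta)$ (via $|v_a\times v_b|=|(e_a\times e_b)\cdot\theta|$) reproduces the paper's explicit coordinate computation, and your identification of the logarithmic blow-up along the great circle $\mathrm{span}(e_a,e_b)\cap S^2$ is precisely the step the paper leaves as a ``straightforward exercise using polar coordinates''.
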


In Section~\ref{s.2cyl3d} we consider two arbitrary fixed cylinders $\cc_1,\cc_2$ and show that 
the $\mu_{3,1}$-measure of the set of lines that intersect both of them 
is infinite, see Proposition~\ref{prop:cylinder3d}. It will then be straightforward to 
prove Theorem \ref{thm:maind=3}, which we do in Section~\ref{s.3dfinish}.

\subsection{Lines intersecting two cylinders in three dimensions}\label{s.2cyl3d}
We write
$$L=\{t(l_1,l_2,l_3)\,:\,-\infty<t<\infty\}$$ for a line in $G(3,1),$ where $l_1^2+l_2^2+l_3^2=1$.
\begin{prop}\label{prop:cylinder3d}
For any two lines $L_1,L_2 \in A(3,1)$,
\[
\mu_{3,1}({\mathcal L}_{\mathfrak{c}(L_1)}\cap {\mathcal L}_{\mathfrak{c}(L_2)})=\infty.
\]
\end{prop}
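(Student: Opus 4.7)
The plan is to use the standard disintegration $\mu_{3,1}=\int_{G(3,1)}\lambda_{v^\perp}\,d\nu_{3,1}(v)$, where each line in $A(3,1)$ of direction $v\in G(3,1)$ is parametrized by its unique intersection point $x$ with $v^\perp$ and $\lambda_{v^\perp}$ denotes $2$-dimensional Lebesgue measure on $v^\perp$. Writing $L_i=p_i+\BR u_i$ and fixing a direction $v\in G(3,1)$, observe that $L_v+x$ meets $\cc(L_i)$ iff $x$ lies in the projected set $\Pi_{v^\perp}(\cc(L_i))$. Since projection commutes with Minkowski sum, $\Pi_{v^\perp}(\cc(L_i))=\Pi_{v^\perp}(L_i)+B^2(0,1)$, which is a straight strip of width $2$ in the $2$-plane $v^\perp$ whenever $v\neq\pm u_i$.

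The measure we want therefore equals
\[
\mu_{3,1}(\CL_{\cc(L_1)}\cap\CL_{\cc(L_2)})=\int_{G(3,1)}\lambda_{v^\perp}\!\left(S_1(v)\cap S_2(v)\right)d\nu_{3,1}(v),
\]
where $S_i(v)$ is the strip of width $2$ around $\Pi_{v^\perp}(L_i)$. The key elementary fact is that for two non-parallel strips of width $2$ in a plane, meeting at an angle $\alpha\in(0,\pi/2]$, the intersection is a parallelogram of area $4/\sin\alpha$. Thus the integrand equals $4/\sin\alpha(v)$ for generic $v$, where $\alpha(v)$ denotes the angle between the two projected line directions in $v^\perp$.

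The crux of the argument is to show that this integral diverges. The projected directions $\Pi_{v^\perp}(u_1)$ and $\Pi_{v^\perp}(u_2)$ are linearly dependent precisely when $v\in\text{span}(u_1,u_2)$. Assume first $u_1\not\parallel u_2$, so this locus is a great circle $\CG\subset G(3,1)$. A direct first-order Taylor expansion of $\Pi_{v^\perp}(u_i)$ around a generic point $v_0\in\CG$ (taking $v=\cos\theta\,v_0+\sin\theta\,n$ with $n\perp\text{span}(u_1,u_2)$) shows that $\sin\alpha(v)=O(|\theta|)$ as $\theta\to0$. Since the $\nu_{3,1}$-measure of a tubular neighborhood of $\CG$ of half-width $\theta$ is of order $\theta$, the contribution splits into the transverse direction $\theta$ and an angular coordinate along $\CG$, producing an integrand proportional to $1/\theta$ in the transverse variable. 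This yields a logarithmic divergence, so $\mu_{3,1}(\CL_{\cc(L_1)}\cap\CL_{\cc(L_2)})=\infty$ in this generic case.

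If instead $u_1\parallel u_2$ (the two axes are parallel), then for every $v\neq\pm u_1$ the two projected lines in $v^\perp$ are themselves parallel, and whenever the distance between them is at most $2$ the intersection of the two strips is itself an infinite strip, so the integrand is already $+\infty$ on a set of positive $\nu_{3,1}$-measure; the conclusion follows a fortiori. The main technical obstacle is the careful asymptotic computation of $\sin\alpha(v)$ near the exceptional great circle $\CG$: one has to check that the $O(|\theta|)$ behavior is correct uniformly on compact subsets of $\CG$ avoiding the two singular points $\pm u_1,\pm u_2$, so that the logarithmic divergence survives the full angular integration along $\CG$. Everything else reduces to integral-geometric bookkeeping, including an identification of the normalization constant via the formula $\mu_{3,1}(\CL_{B^3(0,1)})=\kappa_2$ recalled in Section~\ref{s.notation}.
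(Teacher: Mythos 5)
Your proposal is correct and follows the same backbone as the paper's proof: the disintegration of $\mu_{3,1}$ over directions $v\in G(3,1)$, the identification of the inner integral as the area of the intersection of two width-$2$ strips in $v^{\perp}$, and the rhombus-area formula $4/\sin\alpha(v)$. Where you genuinely diverge is in how the final integral $\int_{G(3,1)}4/\sin\alpha(v)\,d\nu_{3,1}(v)=\infty$ is established. The paper grinds out $\sin\alpha(v)=|v_1\times v_2|/(|v_1||v_2|)$ in explicit coordinates, arriving at the denominator $|k_2l_3-k_3l_2|$, and then declares the divergence "a straightforward exercise using polar coordinates"; you instead identify the degeneracy locus $\{v:\Pi_{v^{\perp}}(u_1)\parallel \Pi_{v^{\perp}}(u_2)\}$ as the great circle $\CG=S^2\cap\mathrm{span}(u_1,u_2)$ and extract a logarithmic divergence from the first-order vanishing $\sin\alpha(v)=O(|\theta|)$ transverse to $\CG$, away from the four singular points $\pm u_1,\pm u_2$. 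This is consistent with the paper's explicit formula (the linear vanishing of $k_2l_3-k_3l_2$ across $\mathrm{span}(e_1,k)$ is exactly your $O(|\theta|)$), and it has the merit of actually supplying the divergence argument the paper leaves implicit, in a coordinate-free way that makes clear the infinity is a genuine logarithmic singularity rather than an artifact. You also treat the parallel-axes case explicitly (infinite-strip intersection on a set of positive $\nu_{3,1}$-measure), which the paper passes over; there you should add one line justifying positivity of the measure, e.g.\ by continuity of the projected inter-line distance, which vanishes for $v\in\mathrm{span}(u_1,p_2-p_1)\setminus\{\pm u_1\}$ and hence is below $2$ on an open set. With that line and the uniform transverse asymptotics you already flag, the argument is complete.
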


\begin{proof}
We will first consider the case when $L_1,L_2\in G(3,1)$, and then
explain how to obtain the general case. By invariance of $\mu_{3,1}$ under translations and
rotations of ${\mathbb R}^3$, we can without loss of generality assume that $L_1=\{te_1:-\infty<t<\infty\}$.
On the other hand,
\[
L_2:=\{t(k_1,k_2,k_3)\,:\,-\infty<t<\infty\},
\]
for some $k_1^2+k_2^2+k_3^2=1.$
By the representation of \cite{SW} Theorem $13.2.12$ we have
\begin{equation}\label{e.measrepr}
\begin{split}
\mu_{3,1}(&{\mathcal L}_{\mathfrak{c}(L_1)}\cap {\mathcal L}_{\mathfrak{c}(L_2)})\\ &
= \int_{G(d,1)} \int_{L^\bot} {\bf 1}\{L+y\in {\mathcal L}_{\mathfrak{c}(L_1)}\cap
{\mathcal L}_{\mathfrak{c}(L_2)}\} \lambda_2(dy)\nu_{3,1}(dL),
\end{split}
\end{equation}
where $\lambda_2$ denotes two-dimensional Lebesgue measure.
Observe that for fixed $L,$ the set of $y\in L^\bot$ such that
$y+L\in {\mathcal L}_{\mathfrak{c}(L_i)}$ is exactly $\Pi_{L^{\bot}}(\mathfrak{c}(L_i))$
for $i=1,2.$
Hence,
\begin{equation}\label{e.projmethod}
\int_{L^\bot} {\bf 1}\{L+y\in {\mathcal L}_{\mathfrak{c}(L_1)}\cap {\mathcal L}_{\mathfrak{c}(L_2)}\} \lambda_2(dy)
=\lambda_2\left(\Pi_{L^{\bot}}(\mathfrak{c}(L_1))\cap \Pi_{L^{\bot}}(\mathfrak{c}(L_2))\right).
\end{equation}

Let $K=K(L):=\Pi_{L^{\bot}}(\mathfrak{c}(L_1))\cap \Pi_{L^{\bot}}(\mathfrak{c}(L_2)).$
The sets $\Pi_{L^\bot}(\mathfrak{c}(L_1))$ and $\Pi_{L^\bot}(\mathfrak{c}(L_2))$ are two-dimensional
cylinders of width $2$ in $L^\bot$, with central axes $\Pi_{L^\bot}(L_1)$ and $\Pi_{L^\bot}(L_2)$ respectively.
Therefore, $K$ is a rhombus except when $\Pi_{L^\bot}(L_1)=\Pi_{L^\bot}(L_2)$ in which case $K$ is an infinite 
strip. If $s(L)$ denotes the sidelength of $K,$ then since the height of $K$ is $2$, 
we have $\lambda_2(K)=2 s(L)$.

We will now find $s(L)$.  Write
$$\tilde{L}_1:=\Pi_{L^\bot}(L_1)\mbox{ and }\tilde{L}_2:=\Pi_{L^\bot}(L_2).$$ For $i=1,2$, the boundary of
 $\Pi_{L^\bot}(\mathfrak{c}(L_i))$ consists of two lines which we denote by $\tilde{L}_i^{'}$ and 
 $\tilde{L}_i^{''}$. One of the sides of $K$ is given by the linesegment between the points
  $\tilde{L}_2^{'}\cap\tilde{L}_1^{'}$ and $\tilde{L}_2^{'}\cap\tilde{L}_1^{''}$. Hence
$$s(L)=|\tilde{L}_2^{'}\cap\tilde{L}_1^{'}-\tilde{L}_2^{'}\cap\tilde{L}_1^{''}|.$$
However, since $\tilde{L}_2$ and $\tilde{L}_2^{'}$ are parallell, and $\tilde{L}_1^{'}$ 
and $\tilde{L}_1^{''}$ are parallel, we also have
\[
s(L)=|\tilde{L}_2\cap\tilde{L}_1^{'}-\tilde{L}_2\cap\tilde{L}_1^{''}|.
\]
Furthermore, since $\tilde{L}_1$ 
is at distance $1$ from both $\tilde{L}_1^{'}$ and $\tilde{L}_1^{''}$, it follows that
\[
s(L)=2|\tilde{L}_2\cap\tilde{L}_1^{'}-\tilde{L}_2\cap\tilde{L}_1|=2 |\tilde{L}_2\cap\tilde{L}_1^{'}|,
\]
using that $\tilde{L}_2\cap\tilde{L}_1=\{o\}$.

We aim to express $|\tilde{L}_2\cap\tilde{L}_1^{'}|$ in terms of the directional vectors of 
$\tilde{L}_1$ and $\tilde{L}_2$. We write the projection $\Pi_{L^\bot}$ in matrix form:
$$\Pi_{L^\bot}:=
\begin{bmatrix}
l_2^2+l_3^2 & -l_1 l_2 & -l_1 l_3 \\
-l_1 l_2 & l_1^2+l_3^2 & -l_2 l_3 \\
-l_1 l_3 & -l_2 l_3 & l_1^2+l_2^2
\end{bmatrix}.$$
Let
$$v_2:=\Pi_{L^\bot} \left[\begin{matrix}k_1 \\ k_2 \\ k_3\end{matrix}\right]
=\begin{bmatrix}(l_2^2+l_3^2)k_1-l_1 l_2 k_2-l_1 l_3 k_3 \\
-l_1 l_2 k_1+(l_1^2+l_3^2)k_2-l_2 l_3 k_3 \\
 -l_1 l_3 k_1-l_2 l_3 k_2 +(l_1^2+l_2^2)k_3  \end{bmatrix},$$
and
$$v_1:=\Pi_{L^\bot}(e_1)=\begin{bmatrix}l_2^2+l_3^2 \\ -l_1 l_2 \\ -l_1 l_3\end{bmatrix},$$
so that $v_1$ and $v_2$ are directional vectors of $\tilde{L}_1$ and $\tilde{L}_2$ respectively.

For $t\in {\mathbb R}$, let $p_t=t v_2$ denote a point on $\tilde{L}_2$. Then for some $t^*$, 
$p_{t^*}=\tilde{L}_2\cap\tilde{L}_1^{'}$.
Since $\tilde{L}_1^{'}$ and $\tilde{L}_1$ are parallell and at distance $1$, the distance between 
$\tilde{L}_2\cap\tilde{L}_1^{'}$ and $\tilde{L}_1$ is $1$. On the other hand, the distance between any point
$p_t$ and $\tilde{L}_1$ is given by the following elementary formula:
$$d(p_t,\tilde{L}_1)=\frac{|v_1\times p_t|}{|v_1|}=|t| \frac{|v_1\times v_2|}{|v_1|}.$$
The solution $t^*$ to the equation $d(p_t,\tilde{L}_1)=1$ is thus given by
$$t^*=\pm\frac{|v_1|}{|v_1\times v_2|}.$$ Hence, the point $\tilde{L}_2\cap\tilde{L}_1^{'}$ is 
either $(|v_1|/|v_1\times v_2|) v_2$ or $-(|v_1|/|v_1\times v_2|) v_2$ so that
$$|\tilde{L}_2\cap\tilde{L}_1^{'}|=\frac{|v_1||v_2|}{|v_1\times v_2|}.$$
We get that
\begin{equation}\label{e.Aexpression}
\lambda_2(K)=2 s(L)=4 |\tilde{L}_2\cap\tilde{L}_1^{'}| =\frac{4 |v_1| |v_2|}{|v_1\times v_2|}.
\end{equation}
Furthermore,
\begin{equation}\label{e.v1calc}
\begin{split}
|v_1|^2  =(l_2^2+l_3^2)^2+l_1^2 l_2^2+l_1^2 l_3^2 = (l_2^2+l_3^2)(l_1^2+l_2^2+l_3^2) =(l_2^2+l_3^2),
\end{split}
\end{equation}
and
\begin{eqnarray}\label{e.v2calc}
\lefteqn{|v_2|^2 = (k_1 l_2^2+k_1 l_3^2 -l_1 l_2 k_2-l_1 l_3 k_3)^2+(l_1 l_2 k_1-k_2 l_1^2-k_2 l_3^2+l_2 l_3 k_3)^2}\\
& & \ \ +(l_1 l_3 k_1+l_2 l_3 k_2-k_3 l_1^2-k_3 l_2^2)^2\nonumber\\
& & = (l_1^2+l_2^2+l_3^2)((k_3 l_1-k_1 l_3)^2+(k_2 l_1 - k_1 l_2)^2+(k_2 l_3-k_3 l_2)^2)\nonumber\\
& &=(k_3 l_1-k_1 l_3)^2+(k_2 l_1 - k_1 l_2)^2+(k_2 l_3-k_3 l_2)^2,\nonumber
\end{eqnarray}
where the second equality follows from straightforward but lengthy calculations.
Further calculations give
\[
v_1\times v_2
=\begin{bmatrix}l_1 l_2^2 l_3 k_2-l_1^3 l_2 k_3-l_1 l_2^3 k_3+l_1^3 l_3 k_2 + l_1 l_3^3 k_2-l_1 l_3^2 l_2 k_3 \\
l_1^2 l_3 l_2 k_2+l_2^3 l_3 k_2-l_2^2 k_3 l_1^2-l_2^4 k_3+l_3^3 l_2 k_2-l_3^2 k_3 l_2^2\\
l_2^2 k_2 l_3^2-l_2^3 l_3 k_3+l_3^2 k_2 l_1^2+l_3^4 k_2-l_3^3 l_2 k_3-l_1^2 l_2 l_3 k_3
\end{bmatrix},
\]
so that
\begin{eqnarray}\label{e.vectorprodcalc}
\lefteqn{|v_1\times v_2|^2 
=(l_1 l_2^2 l_3 k_2-l_1^3 l_2 k_3-l_1 l_2^3 k_3+l_1^3 l_3 k_2 + l_1 l_3^3 k_2-l_1 l_3^2 l_2 k_3)^2}\\
& & \ \ +(l_1^2 l_3 l_2 k_2+l_2^3 l_3 k_2-l_2^2 k_3 l_1^2-l_2^4 k_3+l_3^3 l_2 k_2-l_3^2 k_3 l_2^2)^2\nonumber\\
& & \ \ +(l_2^2 k_2 l_3^2-l_2^3 l_3 k_3+l_3^2 k_2 l_1^2+l_3^4 k_2-l_3^3 l_2 k_3-l_1^2 l_2 l_3 k_3)^2\nonumber\\
& & =((l_1^2+l_2^2+l_3^2)(l_1 l_3 k_2-l_1 l_2 k_3))^2 +((l_1^2+l_2^2+l_3^2)(l_2 l_3 k_2-l_2^2 k_3))^2\nonumber\\
& &\ \ + ((l_1^2+l_2^2+l_3^2)(k_2 l_3^2-l_2 l_3 k_3))^2\nonumber\\
& &=(l_1 l_3 k_2-l_1 l_2 k_3)^2+(l_2 l_3 k_2-l_2^2 k_3)^2+(k_2 l_3^2-l_2 l_3 k_3)^2\nonumber\\
 & &=(l_1^2+l_2^2+l_3^2)(k_2 l_3-l_2 k_3)^2=(k_2 l_3-l_2 k_3)^2.\nonumber
\end{eqnarray}
Combining~\eqref{e.Aexpression},~\eqref{e.v1calc},~\eqref{e.v2calc} and~\eqref{e.vectorprodcalc} we obtain
\[
\lambda_2(K)=\frac{4\sqrt{(l_2^2+l_3^2)((k_3 l_2-k_2 l_3)^2+(k_1 l_2-k_2 l_1)^2+(k_3 l_1-k_1 l_3)^2)}}{|k_2 l_3-k_3l_2 |}.
\]
Using for instance polar coordinates, it is a straightforward exercise to show that 
\begin{equation} \label{eqn:infarea}
\int_{G(3,1)}\lambda_2(K(L))\nu_{3,1}(dL)=\infty.
\end{equation}
Combining \eqref{e.measrepr}, \eqref{e.projmethod} and \eqref{eqn:infarea}
finishes the proof in the case when $L_1,L_2\in G(3,1)$.

We now address the general case. Therefore, let $L_1,L_2\in A(3,1)$ and write
\[
L_1=L_1'+u_1\mbox{ and }L_2=L_2'+u_2,
\]
where $L_1',L_2'\in G(3,1)$ and $u_1,u_2\in {\mathbb R}^3$. The set
\begin{equation} \label{eqn:set}
\Pi_{L^{\bot}}(\mathfrak{c}(L_1))\cap \Pi_{L^{\bot}}(\mathfrak{c}(L_2))
\end{equation}
is still a rhombus,
created by the intersection of two $2$-dimensional cylinders of width $2$ in $L^\bot$.
By an elementary formula for the area of a rhombus, \eqref{eqn:set} equals $4/\sin(\theta)$, 
where $\theta$ is the angle between the
lines $\Pi_{L^\bot}(L_1)$ and $\Pi_{L^\bot}(L_2)$. Observe that
$\Pi_{L^\bot}(L_i)=\Pi_{L^\bot}(L_i')+\Pi_{L^\bot}(u_i)$ for $i=1,2$.
Hence, the angle between $\Pi_{L^\bot}(L_1)$ and $\Pi_{L^\bot}(L_2)$ is the same as the
angle between $\Pi_{L^\bot}(L_1')$ and $\Pi_{L^\bot}(L_2')$. It follows that
\begin{equation}\label{e.rhombareas}
\lambda_2\left(\Pi_{L^{\bot}}(\mathfrak{c}(L_1))\cap \Pi_{L^{\bot}}(\mathfrak{c}(L_2))\right)
=\lambda_2\left(\Pi_{L^{\bot}}(\mathfrak{c}(L_1'))\cap \Pi_{L^{\bot}}(\mathfrak{c}(L_2'))\right).
\end{equation}
From~\eqref{e.measrepr}, ~\eqref{e.projmethod} and~\eqref{e.rhombareas} we get
\[
\mu_{3,1}({\mathcal L}_{\mathfrak{c}(L_1)}\cap {\mathcal L}_{\mathfrak{c}(L_2)})
=\mu_{3,1}({\mathcal L}_{\mathfrak{c}(L_1')}\cap {\mathcal L}_{\mathfrak{c}(L_2')})=\infty,
\]
as above.
\end{proof}

From Proposition~\eqref{prop:cylinder3d}, the following corollary is easy.
\begin{cor}\label{c.connectivity2}
Let $d=3$. Fix $L_1,L_2\in A(3,1)$. For any $u>0$, we have
\[
{\mathbb P}[\omega(\CL_{{\mathfrak c}(L_1)}\cap \CL_{{\mathfrak c}(L_2)})=\infty]=1.
\]
\end{cor}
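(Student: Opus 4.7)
The plan is straightforward: I will combine Proposition~\ref{prop:cylinder3d} with the basic fact that, for a Poisson point process with $\sigma$-finite intensity measure, the count of points in any measurable set of infinite intensity measure is almost surely infinite.

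First I would record that, by Proposition~\ref{prop:cylinder3d}, the measurable set
\[
A := \CL_{{\mathfrak c}(L_1)} \cap \CL_{{\mathfrak c}(L_2)} \subset A(3,1)
\]
satisfies $\mu_{3,1}(A) = \infty$, and hence $u\mu_{3,1}(A) = \infty$ for every $u > 0$.

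Next I would pass from ``infinite intensity'' to ``a.s.\ infinite count'' via a standard exhaustion, since the Poisson distribution is only classically defined with finite mean. Let $B_n$ denote the closed ball of radius $n$ in $\BR^3$ and set $A_n := A \cap \CL_{B_n}$. By the normalization of $\mu_{3,1}$ together with the scaling $\mu_{3,1}(\CL_{B_n}) = n^2 \kappa_2 < \infty$, each $A_n$ has finite $\mu_{3,1}$-measure; moreover $A_n \uparrow A$, so by monotone convergence $u\mu_{3,1}(A_n) \uparrow \infty$. For each $n$, the random variable $\omega(A_n)$ is Poisson with mean $u\mu_{3,1}(A_n)$, so for any fixed integer $M$,
\[
\BP[\omega(A) \le M] \le \BP[\omega(A_n) \le M] \xrightarrow[n\to\infty]{} 0.
\]
Letting $M \to \infty$ gives $\BP[\omega(A) = \infty] = 1$, which is exactly the claim.

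There is no real obstacle here; all the work is contained in Proposition~\ref{prop:cylinder3d}. The only point to watch is the $\sigma$-finite exhaustion of $A$, but this is immediate because $\mu_{3,1}$ assigns finite mass to the set of lines hitting any bounded region of $\BR^3$.
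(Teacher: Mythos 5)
Your proof is correct and follows the same route as the paper, which simply declares the corollary to follow trivially from Proposition~\ref{prop:cylinder3d}; you have merely supplied the standard details (exhaustion of the infinite-measure set by the finite-measure sets $\CL_{B_n}$ and the fact that Poisson counts with diverging means cannot stay bounded). No issues.
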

\begin{proof}
Follows trivially from Proposition~\eqref{prop:cylinder3d}.
\end{proof}

\subsection{Proof of Theorem~\ref{thm:maind=3}}\label{s.3dfinish}
\emph{Proof of Theorem~\ref{thm:maind=3}.}
For lines $L_1,L_2\in A(3,1)$, let
\[
E(L_1,L_2)=\{\omega(\CL_{{\mathfrak c}(L_1)}\cap \CL_{{\mathfrak c}(L_2)})=\infty\}.
\]
We know from Corollary~\ref{c.connectivity2} that
\begin{equation}\label{e.eprob}
{\mathbb P}[E(L_1,L_2)]=1\mbox{ for all }L_1,L_2\in A(3,1).
\end{equation}
Let $D:=\cap_{(L_1,L_2)\in \omega^2_{\neq}}E(L_1,L_2).$
Here $\omega^2_{\neq}$ denotes the set of all $2$-tuples of distinct lines from $\omega$.
Observe that if $D$ occurs, then ${\mathcal C}$ is connected, and moreover any two
cylinders are connected via some other cylinder. Hence it suffices to show that
${\mathbb P}[D]=1$. This is intuitively clear in view of~\eqref{e.eprob}, but we now
make this precise. Observe that
\[
D=\left\{\sum_{(L_1,L_2)\in \omega^2_{\neq}} I(E(L_1,L_2)^c)=0\right\},
\]
so that it suffices to show 
\[
{\mathbb E}\left[\sum_{(L_1,L_2)\in \omega^2_{\neq}} I(E(L_1,L_2)^c)\right]=0.
\]
Let $\BE^{L_1,L_2}$ denote expectation with respect to $\omega+\delta_{L_1}+\delta_{L_2}.$
According to the Slivnyak-Mecke formula (see \cite{SW} Corollary $3.2.3$) we have
\begin{eqnarray*}
\lefteqn{{\mathbb E}\left[\sum_{(L_1,L_2)\in \omega^2_{\neq}} I(E(L_1,L_2)^c)\right]}\\
& & =\int_{A(3,1)} \int_{A(3,1)}{\mathbb E}^{L_1,L_2}\left[I(E(L_1,L_2)^c)\right] \nu_{3,1}(d L_1) \nu_{3,1}(d L_2) \\
& & = \int_{A(3,1)} \int_{A(3,1)}{\mathbb P}^{L_1,L_2}\left[E(L_1,L_2)^c\right] \nu_{3,1}(d L_1) \nu_{3,1}(d L_2) \\
& & = \int_{A(3,1)} \int_{A(3,1)}{\mathbb P}\left[E(L_1,L_2)^c\right] \nu_{3,1}(d L_1) \nu_{3,1}(d L_2) \\
& & = \int_{A(3,1)} \int_{A(3,1)} 0\, \nu_{3,1}(d L_1) \nu_{3,1}(d L_2)= 0,
\end{eqnarray*}
where the penultimate equality follows from Corollary~\ref{c.connectivity2}. This proves that 
$\BP[\diam(\CC)\leq 1]=1.$

It is an immediate consequence from the Poissonian nature of the model, that with probability 1 
there exists two cylinders $\cc_1,\cc_2\in \omega$ such that $\cc_1\cap \cc_2\neq \emptyset.$
Therefore, $\BP[\diam(\CC)>0]=1.$
\fbox{}\\

\section{Preliminary results in $d$ dimensions}\label{s.prelim}

\subsection{Measure of the set of lines that intersect two balls}

The following proposition is a stronger version of Lemma $3.1$ from \cite{TW}. We give a fundamentally
different proof based on the method of Section \ref{s.2cyl3d}. In addition to being a stronger
result, another reason for proving the proposition here is for completeness of the paper.

\begin{prop} \label{prop:dballs}
Consider any two balls $B_1,B_2$ with radii 1 and whose centers are at distance $r$.
There exists constants $0<c_1,c_2<\infty$ dependent on $d$ but not $r,$ such that for any $r\ge 4,$
\begin{equation}\label{e.dballs}
\frac{c_1}{r^{d-1}}\leq \mu_{d,1}(\CL_{B_1}\cap\CL_{B_2})\leq \frac{c_1}{r^{d-1}}+\frac{c_2}{r^{d+1}}.
\end{equation}
\end{prop}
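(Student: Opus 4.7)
The plan is to mimic the projection/Crofton method used in Proposition~\ref{prop:cylinder3d}, but now for the intersection of two balls rather than two cylinders, with the key new ingredient being an explicit change of variables that extracts the leading $r^{-(d-1)}$ decay together with an $r^{-(d+1)}$ correction.

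First I would apply the representation formula (Theorem 13.2.12 in \cite{SW}) exactly as in \eqref{e.measrepr}, with $\lambda_2$ replaced by $(d-1)$-dimensional Lebesgue measure on $L^\perp$:
\[
\mu_{d,1}(\CL_{B_1}\cap\CL_{B_2})=\int_{G(d,1)}\lambda_{d-1}\!\left(\Pi_{L^\perp}(B_1)\cap\Pi_{L^\perp}(B_2)\right)\nu_{d,1}(dL).
\]
Each $\Pi_{L^\perp}(B_i)$ is a $(d-1)$-dimensional unit ball in $L^\perp$, centered at $\Pi_{L^\perp}(c_i)$, where $c_i$ denotes the center of $B_i$. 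Writing $c_1-c_2=rv$ for a unit vector $v$, a direct computation gives $|\Pi_{L^\perp}(c_1-c_2)|=r\sin\theta$, where $\theta$ is the angle between $L$ and the axis spanned by $v$. Thus, denoting by $V(s)$ the volume of the intersection of two $(d-1)$-dimensional unit balls whose centers lie at distance $s$, the integrand equals $V(r\sin\theta)$, and $V(s)=0$ for $s\ge 2$.

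Next I would exploit the rotational symmetry of $\nu_{d,1}$ about $v$ to reduce the integral over $G(d,1)$ to a one-dimensional integral in $\theta$. Parameterizing $G(d,1)$ by the polar angle $\theta\in[0,\pi/2]$ from the axis spanned by $v$ together with a point on $\BS^{d-2}$, the Haar density factors as a constant multiple of $\sin^{d-2}\theta\,d\theta$, so
\[
\mu_{d,1}(\CL_{B_1}\cap\CL_{B_2})=K_d\int_0^{\pi/2}V(r\sin\theta)\sin^{d-2}\theta\,d\theta,
\]
for an explicit $d$-dependent constant $K_d$. The substitution $u=r\sin\theta$ then yields
\[
\mu_{d,1}(\CL_{B_1}\cap\CL_{B_2})=\frac{K_d}{r^{d-1}}\int_0^2 V(u)\,u^{d-2}\bigl(1-u^2/r^2\bigr)^{-1/2}du,
\]
where the truncation at $u=2$ uses that $V$ vanishes beyond that point.

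The lower bound follows immediately by estimating $(1-u^2/r^2)^{-1/2}\ge 1$, giving $c_1=K_d\int_0^2 V(u)u^{d-2}du$, which is finite since $V(u)\le\kappa_{d-1}$. For the upper bound, the hypothesis $r\ge 4$ forces $u^2/r^2\le 1/4$ over the range of integration, where one has an elementary Taylor-type bound
\[
(1-x)^{-1/2}\le 1+\alpha\, x,\qquad x\in[0,1/4],
\]
for a universal $\alpha$. Splitting the integral accordingly yields
\[
\mu_{d,1}(\CL_{B_1}\cap\CL_{B_2})\le\frac{c_1}{r^{d-1}}+\frac{c_2}{r^{d+1}},
\]
with $c_2=\alpha K_d\int_0^2 V(u)u^d du<\infty$, matching \eqref{e.dballs}. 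The only mildly delicate step is identifying the projected distance as $r\sin\theta$ and justifying the factorization of $\nu_{d,1}$ in spherical coordinates; everything else is elementary once the substitution $u=r\sin\theta$ is made, and the scaling $r^{-(d-1)}$ falls out automatically from the Jacobian.
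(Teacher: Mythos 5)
Your proof is correct, and it follows the same basic route as the paper: the Crofton-type representation $\mu_{d,1}(\CL_{B_1}\cap\CL_{B_2})=\int_{G(d,1)}\lambda_{d-1}(\Pi_{L^\perp}(B_1)\cap\Pi_{L^\perp}(B_2))\,\nu_{d,1}(dL)$, reduction to a one-dimensional integral over the angle between $L$ and the axis joining the centers (the paper's $|p_r|=r\sqrt{1-l_1^2}$ is exactly your $r\sin\theta$), and the elementary bound $(1-x)^{-1/2}\le 1+O(x)$ on $x\in[0,1/4]$ to peel off the $r^{-(d+1)}$ correction while keeping the same constant $c_1$ in both bounds. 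The execution differs in a way worth noting: the paper computes the lens volume explicitly as a sum of two spherical caps via the regularized incomplete beta function (citing \cite{LI}), expresses the $\nu_{d,1}$-measure of $\{|l_1|\ge t\}$ by a second incomplete beta function, and then interchanges the order of integration; you instead treat the lens volume as a black-box function $V(u)$, supported on $[0,2]$ and bounded by $\kappa_{d-1}$, and extract the scaling by the single substitution $u=r\sin\theta$. Your version is more economical --- it needs no explicit cap formula and no Fubini step --- at the cost of having to justify the factorization of $\nu_{d,1}$ as a constant times $\sin^{d-2}\theta\,d\theta$ on $[0,\pi/2]$, which you correctly flag and which is standard (push forward the uniform measure on the sphere). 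Both arguments yield the same conclusion with $d$-dependent, $r$-independent constants, and positivity of $c_1$ follows in your setup from $V>0$ on $(0,2)$.
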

\noindent
{\bf Remark:} The proposition is easily generalised to hold for any pair of balls of 
arbitrary, fixed radii.

\begin{proof}
Much as in the proof of Proposition \ref{prop:cylinder3d} we have that
\begin{equation}\label{e.exactmeasure}
\mu_{d,1}({\mathcal L}_{B_1}\cap {\mathcal L}_{B_2})=\int_{G(d,1)}
\lambda_{d-1}(\Pi_{L^\bot}(B_1)\cap \Pi_{L^\bot}(B_2))
\nu_{d,1}(dL),
\end{equation}
where $\lambda_{d-1}$ denotes $d-1$-dimensional Lebesgue measure.
Without loss of generality, we let the center of $B_1$ be the origin, while the center of
$B_2$ is $(r,0,\ldots,0).$ Observe that if $L\in G(d,1)$ is written on the form $L=\{t(l_1,...,l_d)\,:\,t\in {\mathbb R}\}$
with $l_1^2+...+l_d^2=1$, then the projection matrix $\Pi_{L^{\bot}}$ is given by
$(\Pi_{L^{\bot}})_{ii}=1-l_i^2$ and $(\Pi_{L^{\bot}})_{ij}=-l_i l_j$ for $i\neq j$.
Let $p_r=p_r(L)=\Pi_{L^\bot}((r,0,\ldots,0)).$
Then straightforward calculations give
\[
p_r=r(1-l_1^2,-l_1l_2,\ldots,-l_1l_d),
\]
so that
\[
|p_r|^2=r^2((1-l_1^2)^2+l_1^2l_2^2+\cdots+l_1^2l_d^2)=r^2(1-2l_1^2+l_1^2(l_1^2+\cdots+l_d^2))=r^2(1-l_1^2).
\]

Observe that $\Pi_{L^\bot}(B_1)$ and $\Pi_{L^\bot}(B_2)$ intersects whenever
$|p_r|\leq 2,$ or equivalently when $|l_1|\geq \sqrt{1-4/r^2}.$
Furthermore, the volume of the lens $\Pi_{L^\bot}(B_1)\cap \Pi_{L^\bot}(B_2)$ is simply
the sum of two spherical caps of height $h=\max(1-|p_r|/2,0).$ The volume of one such spherical
cap is (see for instance \cite{LI}) given by
\[
\frac{1}{2}\kappa_{d}J_{2h-h^2}\left(\frac{d+1}{2},\frac{1}{2}\right).
\]
As before, $\kappa_d$ is the volume of the d-dimensional ball and $J_{2h -h^2}$ denotes a regularized
incomplete beta function. We note that $2h-h^2=\max(1-|p_r|^2/4,0)=\max(1-r^2(1-l_1^2)/4,0),$
so that \eqref{e.exactmeasure} becomes
\begin{equation}\label{eqn:intsphere1}
\kappa_d\int_{G(d,1)}J_{1-r^2(1-l_1^2)/4}\left(\frac{d+1}{2},\frac{1}{2}\right)
I\{|l_1|\geq \sqrt{1-4/r^2}\}\nu_{d,1}(dL).
\end{equation}
Furthermore,
\[
J_{1-r^2(1-l_1^2)/4}\left(\frac{d+1}{2},\frac{1}{2}\right)
=\frac{\int_0^{1-r^2(1-l_1^2)/4}t^{\frac{d+1}{2}-1}(1-t)^{\frac{1}{2}-1}dt}
{\int_0^{1}t^{\frac{d+1}{2}-1}(1-t)^{\frac{1}{2}-1}dt}.
\]
Let $f(t):=t^{\frac{d+1}{2}-1}(1-t)^{\frac{1}{2}-1}$ and set $D_{d+1}=\int_0^1 f(t)dt.$
We get that \eqref{eqn:intsphere1} equals
\begin{eqnarray} \label{eqn:ballest1}
\lefteqn{\frac{\kappa_d}{D_{d+1}}\int_{G(d,1)} \int_0^1I\{t\leq 1-r^2(1-l_1^2)/4\}f(t)dt
I\{l_1^2\geq 1-4/r^2\}\nu_{d,1}(dL)}\\
& & =\frac{\kappa_d}{D_{d+1}}\int_{G(d,1)} \int_0^1I\{l_1^2 \geq 1-4(1-t)/r^2\}
f(t)dtI\{l_1^2\geq 1-4/r^2\}\nu_{d,1}(dL)\nonumber\\
& & =\frac{\kappa_d}{D_{d+1}}\int_0^1\int_{G(d,1)} I\{l_1^2 \geq 1-4(1-t)/r^2\}
\nu_{d,1}(dL)f(t)dt. \nonumber
\end{eqnarray}
Furthermore, since $\nu_{d,1}(G(d,1))=1,$ we have that (see again \cite{LI})
\begin{equation} \label{eqn:ballest2}
\int_{G(d,1)}{\bf 1}\{|l_1|\geq \sqrt{1-4(1-t)/r^2}\}\nu_{d,1}(dL)
=J_{2h'-h'^2}\left(\frac{d-1}{2},\frac{1}{2}\right),
\end{equation}
where $h'=1-\sqrt{1-4(1-t)/r^2}$ so that $2h'-h'^2=4(1-t)/r^2.$
As above, with $D_{d-1}=\int_0^{1}s^{\frac{d-1}{2}-1}(1-s)^{\frac{1}{2}-1}ds,$
we get that
\begin{eqnarray}\label{eqn:ballest3}
\lefteqn{J_{4(1-t)/r^2}\left(\frac{d-1}{2},\frac{1}{2}\right)
=\frac{\int_0^{4(1-t)/r^2}s^{\frac{d-1}{2}-1}(1-s)^{\frac{1}{2}-1}ds}
{\int_0^{1}s^{\frac{d-1}{2}-1}(1-s)^{\frac{1}{2}-1}ds}}\\
& & \geq \frac{1}{D_{d-1}}\int_0^{4(1-t)/r^2}s^{\frac{d-1}{2}-1}ds
=\frac{2}{(d-1)D_{d-1}}\left(\frac{4(1-t)}{r^2}\right)^{(d-1)/2}
=C_1(1-t)^{(d-1)/2}r^{-(d-1)}. \nonumber
\end{eqnarray}
Furthermore, since $r\geq 4$, then for $0\leq s \leq 4(1-t)/r^2\leq 1/4$ (as $0\leq t\leq 1$),
\[
\frac{1}{1-s}=1+\frac{s}{1-s}\leq 1+2s\leq (1+s)^2,
\]
so that $(1-s)^{-1/2}\leq 1+s.$ Hence,
\begin{eqnarray}\label{eqn:ballest4}
\lefteqn{J_{4(1-t)/r^2}\left(\frac{d-1}{2},\frac{1}{2}\right)
\leq \frac{1}{D_{d-1}}\int_0^{4(1-t)/r^2}s^{\frac{d-1}{2}-1}(1+s)ds}\\
& & =C_1(1-t)^{(d-1)/2}r^{-(d-1)}+\frac{2}{(d+1)D_{d-1}}\left(\frac{4(1-t)}{r^2}\right)^{(d+1)/2} \nonumber \\
& & =C_1(1-t)^{(d-1)/2}r^{-(d-1)}+C_2(1-t)^{(d+1)/2}r^{-(d+1)}.\nonumber
\end{eqnarray}
Combining \eqref{eqn:intsphere1}, \eqref{eqn:ballest1}, \eqref{eqn:ballest2} and \eqref{eqn:ballest3}
we get that
\begin{eqnarray*}
\lefteqn{\int_{G(d,1)}\lambda_2(\Pi_{L^\bot}(B_1)\cap \Pi_{L^\bot}(B_2))\nu_{d,1}(dL)}\\
& & \geq r^{-(d-1)}\frac{C_1\kappa_d}{D_{d+1}} \int_0^1 f(t)(1-t)^{(d-1)/2}dt=C_3r^{-(d-1)}.
\end{eqnarray*}
Similarly, combining \eqref{eqn:intsphere1}, \eqref{eqn:ballest1}, \eqref{eqn:ballest2}
and \eqref{eqn:ballest4} we get that
\begin{eqnarray*}
\lefteqn{\int_{G(d,1)}\lambda_2(\Pi_{L^\bot}(B_1)\cap \Pi_{L^\bot}(B_2))\nu_{d,1}(dL)}\\
& & \leq C_3r^{-(d-1)}+r^{-(d+1)}\frac{C_1\kappa_d}{D_{d+1}} \int_0^1 f(t)(1-t)^{(d+1)/2}dt\\
& & =C_3r^{-(d-1)}+C_4r^{-(d+1)}.
\end{eqnarray*}
\end{proof}

\subsection{Measure of the set of lines that intersect a ball and a cylinder}

In this section we estimate the measure of lines that intersect both a ball and a cylinder that are far apart.
We say that a ball $B(x,1)$ and a cylinder $\cc$ is at distance $r,$ if the distance between $x$ and 
the centerline of $\cc$ is $r.$

\begin{prop} \label{prop:ball-line1}
For any $d\geq 3,$ there exist constants $c=c(d)>0$ and $c'=c'(d)<\infty$ such that for all $r\geq 1,$
and $x\in \BR^d,$ $L\in A(d,1)$ at distance $r$ from each other, we have that
\begin{equation}\label{e.ballcylmeas}
c\, r^{-(d-2)}\le \mu_{d,1}({\mathcal L}_{B(x,1)}\cap {\mathcal L}_{{\mathfrak c}(L)})\le c'\, r^{-(d-2)}.
\end{equation}
\end{prop}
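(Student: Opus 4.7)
The plan is to apply the integral-geometric representation of $\mu_{d,1}$ used in the proofs of Propositions~\ref{prop:cylinder3d} and~\ref{prop:dballs}:
\[
\mu_{d,1}(\CL_{B(x,1)} \cap \CL_{\cc(L)}) = \int_{G(d,1)} \lambda_{d-1}\!\left(\Pi_{M^\bot}(B(x,1)) \cap \Pi_{M^\bot}(\cc(L))\right) \nu_{d,1}(dM).
\]
By translation and rotation invariance of $\mu_{d,1}$ I may take $L = \BR e_1$ and $x = re_2$.

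First I would identify the two projected sets. The set $\Pi_{M^\bot}(B(x,1))$ is simply the $(d-1)$-dimensional unit ball centered at $\Pi_{M^\bot}(x)$. For the cylinder, writing $\cc(L) = L + B^{L^\bot}(0,1)$ gives $\Pi_{M^\bot}(\cc(L)) = \ell + \Pi_{M^\bot}(B^{L^\bot}(0,1))$ with $\ell := \Pi_{M^\bot}(L)$. The key geometric observation is that this set coincides with the solid unit cylinder in $M^\bot$ around $\ell$: setting $V := M^\bot \cap e_1^\bot$, one checks that $\ell \perp V$ and hence $M^\bot = \ell \oplus V$; since $V \subset M^\bot$ and $V \subset L^\bot$, the composition $\Pi_V \circ \Pi_{M^\bot}$ restricted to $L^\bot$ equals $\Pi_V$, so the perpendicular cross-section of the projected tube is the $(d-2)$-dimensional unit ball $\Pi_V(B^{L^\bot}(0,1))$, and therefore $\Pi_{M^\bot}(\cc(L)) = \{q \in M^\bot : d(q,\ell) \le 1\}$.

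Next I would compute $\rho := d_{M^\bot}(\Pi_{M^\bot}(x),\ell) = |\Pi_V(x)|$ as a function of the direction of $M$. Parameterizing the direction as $w = \cos\alpha\,e_1 + \sin\alpha(\cos\beta\,e_2 + \sin\beta\,\eta)$ with $\alpha, \beta \in [0,\pi]$ and $\eta \in S^{d-3} \subset \mathrm{span}(e_3,\dots,e_d)$, a short calculation gives $|\Pi_V(e_2)| = |\sin\beta|$, and so $\rho = r|\sin\beta|$. The integrand above is therefore $F_d(r|\sin\beta|)$, where $F_d(\rho)$ denotes the volume of the intersection of a unit ball and a unit solid cylinder in $\BR^{d-1}$ whose axis lies at distance $\rho$ from the ball's center. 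This function is continuous, bounded above by $\kappa_{d-1}$, vanishes on $[2,\infty)$, and is bounded below by a positive constant on $[0,1]$.

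For the final integration, in the coordinates $(\alpha,\beta,\eta)$ the Haar measure $\nu_{d,1}$ has density proportional to $\sin^{d-2}\alpha\sin^{d-3}\beta\,d\alpha\,d\beta\,d\eta$, and since the integrand depends only on $\beta$, integration over $\alpha$ and $\eta$ reduces the total to a constant multiple of $\int_0^\pi F_d(r|\sin\beta|)\sin^{d-3}\beta\,d\beta$. Splitting at $\beta = \pi/2$ and substituting $u = r\sin\beta$ in each half transforms this into
\[
2C_d\, r^{-(d-2)} \int_0^{\min(r,2)} \frac{F_d(u)\, u^{d-3}}{\sqrt{1-(u/r)^2}}\, du.
\]
For $r \ge 4$ the factor $(1-(u/r)^2)^{-1/2}$ lies in $[1, 2/\sqrt{3}]$ on the support of $F_d$ and $\int_0^2 F_d(u) u^{d-3}\,du$ is a positive finite constant, giving matching lower and upper bounds of order $r^{-(d-2)}$; the bounded range $1 \le r \le 4$ is dealt with separately by noting that there both the measure in question and $r^{-(d-2)}$ lie between positive constants. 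I expect the main obstacle to be the geometric identification of $\Pi_{M^\bot}(\cc(L))$ as a genuine circular cylinder in $M^\bot$ rather than a tube with ellipsoidal cross-section; once that structural fact is in hand, both bounds fall out of the same one-dimensional substitution.
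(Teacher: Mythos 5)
Your proposal is correct, and it takes a genuinely different route from the paper. The paper never computes the ball--cylinder measure directly: for the upper bound it covers the cylinder by a bi-infinite string of radius-$2$ balls $B_i=B((0,i,0,\dots,0),2)$, applies the two-ball estimate of Proposition~\ref{prop:dballs} to each pair $(B(x,1),B_i)$, and sums $\sum_i (r^2+i^2)^{-(d-1)/2}\asymp r^{-(d-2)}$; for the lower bound it packs $\lfloor r\rfloor-1$ disjoint small balls $D_i$ into the cylinder, shows by a slope comparison that the sets $\CL_{D_i}\cap\CL_{B(x,1)}$ are pairwise disjoint, and again sums the two-ball estimates. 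You instead apply the projection representation of $\mu_{d,1}$ (the same \cite{SW} Theorem 13.2.12 used in Propositions~\ref{prop:cylinder3d} and~\ref{prop:dballs}) directly to the ball--cylinder pair, reduce to the one-dimensional integral $\int_0^\pi F_d(r\sin\beta)\sin^{d-3}\beta\,d\beta$, and extract $r^{-(d-2)}$ by the substitution $u=r\sin\beta$. The one structural point your argument leans on --- that $\Pi_{M^\bot}(\cc(L))$ is a genuine circular solid cylinder of radius $1$ about $\ell=\Pi_{M^\bot}(L)$, not an ellipsoidal tube --- checks out: with $V=M^\bot\cap L^\bot$ one has $\ell\perp V$, $M^\bot=\ell\oplus V$, and $\ell+\Pi_{M^\bot}(B^{L^\bot}(0,1))=\ell+\Pi_V(B^{L^\bot}(0,1))=\{q:d(q,\ell)\le 1\}$ since $\Pi_V(m)=0$ and $V\subset L^\bot$; likewise $|\Pi_V(e_2)|=|\sin\beta|$ is right. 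Your approach buys a single unified computation giving both bounds at once (and in principle the exact asymptotic constant), at the cost of the spherical-coordinate bookkeeping for $\nu_{d,1}$ and the geometric identification above; the paper's approach is blunter but reuses Proposition~\ref{prop:dballs} as a black box and requires no new integral-geometric computation. Both are complete modulo the routine treatment of the bounded range $1\le r\le 4$ (respectively $r\le 10$ in the paper), which each handles by adjusting constants.
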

\begin{proof}
We begin with the upper bound. By rotation and translation invariance
of $\mu_{d,1}$, we can without loss of generality assume that $x=(r,...,0)$ and $L=\{t e_2\,:\,t\in{\mathbb R}\}$. For
$i\in {\mathbb Z}$, let $B_i:=B((0,i,0,...,0),2)$. Observe that
$${\mathcal L}_{{\mathfrak c}(L)}\subset \bigcup_{i\in {\mathbb Z}} {\mathcal L}_{B_i}$$ so that
\begin{equation}\label{e.linsetincl}
{\mathcal L}_{B(x,1)}\cap {\mathcal L}_{{\mathfrak c}(L)}
\subset \bigcup_{i\in {\mathbb Z}} \left({\mathcal L}_{B(x,1)}\cap {\mathcal L}_{B_i}\right).
\end{equation}
We now get for $r\ge 1$ that
\begin{eqnarray*}
\lefteqn{\mu_{d,1}({\mathcal L}_{B(x,1)}\cap {\mathcal L}_{{\mathfrak c}(L)})
\stackrel{~\eqref{e.linsetincl}}{\le} \sum_{i=-\infty}^{\infty}\mu_{d,1}({\mathcal L}_{B(x,1)}\cap {\mathcal L}_{B_i})
\stackrel{~\eqref{e.dballs}}{\le} c\sum _{i=-\infty}^{\infty} (r^2+i^2)^{-(d-1)/2}}\\
 & &= c r^{-(d-1)}\sum _{i=-\infty}^{\infty} (1+(i/r)^2)^{-(d-1)/2}\le c' r^{-(d-1)} \int_{-\infty}^{\infty} (1+(x/r)^2)^{-(d-1)/2}\, dx\\ & &=c' r^{-(d-2)}\int_{-\infty}^{\infty} (1+y^2)^{-(d-1)/2}\, dy=c'' r^{-(d-2)},
\end{eqnarray*}
where the integral in the last step is convergent since $d\ge 3$. This finishes the proof of the 
upper bound in~\eqref{e.ballcylmeas}, and we proceed with the lower bound.

For proof-technical reasons we now assume that $r\ge 10$.
For $i\in \{2,3,...,\lfloor r \rfloor\}$, let $D_i:=B((0,i,...,0),1/8)$. Observe that
$$\bigcup_{i=2}^{\lfloor r \rfloor}{\mathcal L}_{D_i}\subset {\mathcal L}_{{\mathfrak c}(L)}$$ so that

\begin{equation}\label{e.linsetincl2}
\bigcup_{i=2}^{\lfloor r \rfloor}({\mathcal L}_{D_i}\cap {\mathcal L}_{B(x,1)})
\subset {\mathcal L}_{{\mathfrak c}(L)}\cap {\mathcal L}_{B(x,1)}.
\end{equation}
We will now show that
\begin{equation}\label{e.linedisjoints}
({\mathcal L}_{D_i}\cap {\mathcal L}_{B(x,1)})_{i=2}^{\lfloor r \rfloor}
\mbox{ is a sequence of pairwise disjoint sets of lines.}
\end{equation}
Let $i,j\in\{2,...,\lfloor r \rfloor\}$ where $i\neq j$ and assume that
\begin{equation}\label{e.lcontr1}
L_1\in {\mathcal L}_{D_i} \cap {\mathcal L}_{D_j}
\end{equation}
and that
\begin{equation}\label{e.lcontr2}
L_1\in {\mathcal L}_{D_i}\cap {\mathcal L}_{B(x,1)}.
\end{equation}
As usual, we write $L_1$ on the form $L_1=\{t(k_1,...,k_d)\,:\,t\in {\mathbb R}\}+v$ for some $v\in {\mathbb R}^d$. We observe that if~\eqref{e.lcontr1} holds, then
\[
\frac{k_1}{k_2} \ge \frac{-2/8}{|i-j|-2/8}\ge \frac{-2/8}{1-2/8}\ge -\frac{1}{3},
\]
while for~\eqref{e.lcontr2} to be satisfied, then
\[
\frac{k_1}{k_2}\le -\frac{r-1-1/8}{j+1+1/8}\le -\frac{r-1-1/8}{r+1+1/8}\le-\frac{10-1-1/8}{10+1+1/8}= -\frac{71}{89}.
\]
We conclude that~\eqref{e.lcontr1} and~\eqref{e.lcontr2} cannot both hold, which proves~\eqref{e.linedisjoints}.

Proceeding, we have that
\begin{eqnarray*}
\lefteqn{\mu_{d,1}({\mathcal L}_{{\mathfrak c}(L)}\cap {\mathcal L}_{B(x,1)})
\stackrel{~\eqref{e.linsetincl2}}{\ge}
\mu_{d,1}(\cup_{i=2}^{\lfloor r \rfloor}({\mathcal L}_{D_i}\cap {\mathcal L}_{B(x,1)}))
\stackrel{~\eqref{e.linedisjoints}}{=}
\sum _{i=2}^{\lfloor r \rfloor} \mu_{d,1}({\mathcal L}_{D_i}\cap {\mathcal L}_{B(x,1)})}\\ & &
\stackrel{~\eqref{e.dballs}}{\ge}c \sum _{i=2}^{\lfloor r \rfloor} (r^2+i^2)^{-(d-1)/2}\ge c \sum _{i=2}^{\lfloor r \rfloor}( 2 r^2)^{-(d-1)/2}= c' (\lfloor r\rfloor -1) r^{-(d-1)}\ge c'' r^{-(d-2)},
\end{eqnarray*}
finishing the proof of the proposition in the case $r\ge 10$. The full statement follows easily.
\end{proof}

\section{Proof of Theorem~\ref{t.maind}, the lower bound when $d\geq 4$}\label{s.lbound}
In this section we prove the following theorem.
\begin{thm} \label{thm:lbound}
For any $d\geq 4,$ $\BP[\diam(\CC)\geq d-2]=1.$
\end{thm}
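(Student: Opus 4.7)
The plan is a first-moment computation on chains of cylinders, combined with ergodicity. Fix two unit balls $B_1,B_2\subset\BR^d$ at distance $r$, and for each $\ell\ge 1$ let $N_\ell$ be the number of ordered $\ell$-tuples $(L_1,\ldots,L_\ell)$ of distinct lines in $\omega$ such that $\cc(L_1)\cap B_1\ne\emptyset$, $\cc(L_\ell)\cap B_2\ne\emptyset$, and $\cc(L_i)\cap\cc(L_{i+1})\ne\emptyset$ for every $1\le i\le\ell-1$. A chain with $\ell$ cylinders witnesses a pair of endpoints at cylinder distance $\le\ell-2$, so on the event $\{N_\ell=0\text{ for all }\ell\le d-1\}$, any pair $\cc_a,\cc_b\in\omega$ intersecting $B_1$ and $B_2$ respectively satisfies $\mathrm{Cdist}(\cc_a,\cc_b)\ge d-2$. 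I would show (i) $\BE[N_\ell]\to 0$ as $r\to\infty$ for every $\ell\le d-1$, and (ii) the probability that some cylinder of $\omega$ meets $B_1$ and some cylinder meets $B_2$ is bounded below uniformly in $r$. Markov's inequality and a union bound then give $\BP[\diam(\CC)\ge d-2]>0$, and since this event is invariant under translations of $\BR^d$ while the Poisson line process is translation-ergodic, its probability must equal one.

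For step (i) the Slivnyak--Mecke formula reads
\[
\BE[N_\ell]=u^\ell\int_{A(d,1)^\ell}\mathbf{1}\{\text{chain condition}\}\,\mu_{d,1}(dL_1)\cdots\mu_{d,1}(dL_\ell).
\]
I would discretize intersection locations along a chain by means of the lattice $\BZ^d$: for any chain realization, there exist $z_0,\ldots,z_\ell\in\BZ^d$ with $z_0,z_\ell$ within a constant of the centers of $B_1,B_2$ such that each $L_i$ intersects both $B(z_{i-1},C)$ and $B(z_i,C)$ for some universal constant $C$. Using the generalization of Proposition~\ref{prop:dballs} to balls of arbitrary fixed radius to bound each factor by $c(1+|z_i-z_{i-1}|)^{-(d-1)}$ yields
\[
\BE[N_\ell]\le(C'u)^\ell\sum_{z_1,\ldots,z_{\ell-1}\in\BZ^d}\prod_{i=1}^\ell(1+|z_i-z_{i-1}|)^{-(d-1)}.
\]
This is the $\ell$-fold convolution of a Riesz-type kernel evaluated at $z_\ell-z_0$; since convolution powers of $|z|^{-(d-1)}$ behave like $|z|^{-(d-\ell)}$ as long as $\ell<d$, one obtains $\BE[N_\ell]\le C''r^{-(d-\ell)}\le C''r^{-1}$ whenever $\ell\le d-1$. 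For step (ii), $\mu_{d,1}(\CL_{B_i})$ is a positive finite constant, and independence of the Poisson process on the disjoint line-sets $\CL_{B_1}\setminus\CL_{B_2}$ and $\CL_{B_2}\setminus\CL_{B_1}$ (both of positive measure once $r$ is large) gives the required lower bound.

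The main technical obstacle is the convolution estimate on $\BZ^d$. Since $\sum_{z\in\BZ^d}|z|^{-(d-1)}=\infty$, the sum over each single intermediate $z_i$ diverges, and the entire decay in $r$ must arise from the constraint $|z_\ell-z_0|\sim r$ tying the two endpoints together. I would establish the bound by induction on $\ell$, at each step splitting the sum by the position of one intermediate lattice point relative to the straight-line interpolation between $z_0$ and $z_\ell$ and reducing to a convolution of one fewer factor. Once this Riesz-potential bound is in hand, the remaining ingredients---Slivnyak--Mecke, the positive lower bound on the joint existence event, and translation-ergodicity of the Poisson line process---are standard.
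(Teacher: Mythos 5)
Your proposal is correct, and its analytic core coincides with the paper's: the estimate $\BE[N_\ell]\le C r^{-(d-\ell)}$ is exactly the paper's Proposition~\ref{p.lbound} (there phrased for points $x,y$ and events $\tilde A_n(x,y)$ rather than for cylinders meeting two balls, which is only cosmetic), obtained by the same lattice discretization of the intersection locations, the same application of the Slivnyak--Mecke formula together with Proposition~\ref{prop:dballs}, and the same Riesz-kernel convolution bound $\sum_{z_1,\dots,z_{n-1}}\prod_i\min(1,|z_i-z_{i+1}|^{-(d-1)})\le c|z_0-z_n|^{-(d-n)}$ --- which the paper simply cites from \cite{HHS} (Lemma~\ref{l.convulest}) while you propose to reprove it by induction; your sketch of that induction is the standard argument and is fine, though citing it would save work. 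Where you genuinely diverge is the endgame. The paper never invokes ergodicity: in Corollary~\ref{c.lbound} it places two segments $K_R^1$ and $K_R^2$ of length $R$ at mutual distance $e^R$, shows that each is hit by $\CC$ except with probability $e^{-cR}$ (via disjoint line-sets $F_i$ of uniformly positive measure) while a union bound over the $R^2$ pairs gives connection probability at most $cR^2e^{-R}$, and concludes that the ``bad pair exists'' event has probability tending to $1$, hence its union over $R$ has probability $1$. You instead fix two balls at large finite distance, get a uniform positive lower bound on the probability of a bad pair, and upgrade to probability one by translation-ergodicity of the Poisson line process. Both routes are valid; yours is shorter but shifts the burden onto ergodicity, which is true but deserves a one-line justification (e.g.\ mixing follows because $\mu_{d,1}(\CL_K\cap\CL_{K'+x})\to0$ as $|x|\to\infty$ by Proposition~\ref{prop:dballs}), whereas the paper's construction is entirely quantitative and self-contained. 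One small point to keep explicit in your write-up: on the event $\{N_\ell=0\ \forall\,\ell\le d-1\}$ the case $\ell=1$ rules out a single cylinder meeting both balls, so the two witnessing cylinders are automatically distinct.
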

 As a key step, we first show that the probability that
two points $x$ and $y$ in ${\mathbb R}^d$ are connected via a sequence of at most $d-1$ cylinders
tends to $0$ as $|x-y|\to \infty$, see Proposition~\ref{p.lbound} below.
We will think of the integer lattice ${\mathbb Z}^d$ as a subset ${\mathbb R}^d$, embedded in the natural way.

For each $y=(y_1,...,y_d)\in {\mathbb R}^d$ let
$\lfloor y \rfloor:=(\lfloor y_1 \rfloor,...,\lfloor y_d \rfloor)\in {\mathbb Z}^d$.

We will need the following lemma, which (as remarked in \cite{RS}) follows from $(1.38)$ of
Proposition $1.7$ in \cite{HHS}.
\begin{lem}\label{l.convulest}
For any positive integer $n<d$ and any $z_0,z_n\in {\mathbb Z}^d$ there exists a constant $c=c(u,d)<\infty$ such that 
\begin{equation}\label{e.mconvulest}
\sum_{z_1,...,z_{n-1}\in {\mathbb Z}^d}\prod_{i=0}^{n-1} \min(1,|z_i-z_{i+1}|^{-(d-1)})\le c |z_0-z_n|^{-(d-n)}.
\end{equation}
\end{lem}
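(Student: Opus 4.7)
The plan is induction on $n$. The base case $n=1$ is immediate: there are no free summation variables, and the product collapses to $\min(1,|z_0-z_n|^{-(d-1)}) \le |z_0-z_n|^{-(d-1)} = |z_0-z_n|^{-(d-n)}$, with the inequality vacuous when $z_0=z_n$. For the inductive step $n\ge 2$, I would peel off the last summation variable $w:=z_{n-1}$ and write
$$\sum_{z_1,\ldots,z_{n-1}\in\BZ^d}\prod_{i=0}^{n-1}\min(1,|z_i-z_{i+1}|^{-(d-1)}) = \sum_{w\in\BZ^d} T_{n-1}(z_0,w)\,\min(1,|w-z_n|^{-(d-1)}),$$
where $T_{n-1}(z_0,w)$ is the $(n-1)$-fold sum with endpoints $z_0$ and $w$. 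By the induction hypothesis, $T_{n-1}(z_0,w)\le c\,|z_0-w|^{-(d-n+1)}$ for all $w$ with $|z_0-w|\ge 1$; the finitely many lattice points with $|z_0-w|<1$ (effectively $w=z_0$) contribute a bounded amount that gets absorbed into the final constant.

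This reduces the lemma to a single discrete convolution estimate: for $0<\alpha,\beta<d$ with $\alpha+\beta>d$ and $|z|\ge 1$,
$$\sum_{w\in\BZ^d}\min(1,|w|^{-\alpha})\,\min(1,|w-z|^{-\beta}) \le C\,|z|^{-(\alpha+\beta-d)}.$$
The induction uses this with $\alpha=d-1$ and $\beta=d-n+1$; the standing assumption $n<d$ is exactly what gives $\alpha+\beta=2d-n>d$, and $n\ge 2$ gives $\beta<d$, so both hypotheses of the convolution estimate are met. To prove it, I would partition $\BZ^d$ into three regions according to where $w$ sits: near the origin ($|w|\le|z|/2$), near $z$ ($|w-z|\le|z|/2$), and away from both. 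On the first region $|w-z|\asymp|z|$, so the contribution is bounded by $|z|^{-\beta}\sum_{|w|\le|z|/2}\min(1,|w|^{-\alpha}) = O(|z|^{-\beta}\cdot|z|^{d-\alpha})$ via a Riemann-sum comparison, using $\alpha<d$ to make the lattice sum near $0$ integrable. The second region is symmetric. On the third region both factors are pure power laws and an integral comparison yields $O(|z|^{d-\alpha-\beta})$, where $\alpha+\beta>d$ supplies the decay at infinity. Summing the three contributions gives the required $O(|z|^{-(\alpha+\beta-d)})$.

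The main obstacle is the convolution estimate itself and the verification that its parameters track the constraint $n<d$ cleanly; the induction bookkeeping is routine once that is in hand. This estimate is classical and is essentially the content of Proposition $1.7$, equation $(1.38)$ in the Hara--van der Hofstad--Slade reference cited by the authors (phrased there in terms of random walk Green's functions), so a clean write-up can either invoke that result as a black box or adapt the short region-splitting argument sketched above.
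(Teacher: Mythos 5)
Your proposal is correct, but it is worth noting that the paper itself offers no proof of this lemma at all: the authors simply state that it ``follows from $(1.38)$ of Proposition $1.7$'' of the Hara--van der Hofstad--Slade reference, as remarked in the R\'ath--Sapozhnikov paper they adapt. You have in effect supplied the proof that the paper delegates to the literature, and you correctly identify at the end that your key convolution estimate
\[
\sum_{w\in\BZ^d}\min(1,|w|^{-\alpha})\,\min(1,|w-z|^{-\beta}) \le C\,|z|^{-(\alpha+\beta-d)},
\qquad 0<\alpha,\beta<d,\ \alpha+\beta>d,
\]
is precisely the content of that cited result. The induction bookkeeping is right: peeling off $z_{n-1}$ uses the estimate with $\alpha=d-n+1$ and $\beta=d-1$, the hypothesis $n<d$ gives $\alpha+\beta=2d-n>d$, $n\ge 2$ gives $\alpha<d$, and the output exponent $\alpha+\beta-d=d-n$ is exactly what the lemma claims. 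Two small points to tidy in a full write-up: (i) when you absorb the diagonal term $w=z_0$ you should note that $T_{n-1}(z_0,z_0)$ is itself finite (this follows from the same convolution bound one level down, using $d\ge 3$); (ii) in the third region of the splitting you need one further subdivision (say $|w|\le 2|z|$ versus $|w|>2|z|$) before the integral comparison applies cleanly, since neither factor alone is comparable to $|z|$ throughout that region. Neither point is a gap, and your argument has the advantage of being self-contained where the paper's is not.
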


For $x,y\in {\mathbb R}^d$ and $n\ge 1$, let $A_n(x,y)$ be the event that there exist
\emph{distinct} lines $L_1,...,L_n\in \omega$ such that
\begin{enumerate}
\item[1.] $x\in {\mathfrak c}(L_1)$ and $y\in {\mathfrak c}(L_n)$.
\item[2.] ${\mathfrak c}(L_i)\cap {\mathfrak c}(L_{i+1})\neq \emptyset$ for $i=1,...,n-1$.
\end{enumerate}
In addition, let
$$\tilde{A}_n(x,y)=\cup_{i=1}^n A_i(x,y).$$
We can now state the first result of this section:
\begin{prop}\label{p.lbound}
For $d\geq 3,$ $n\in \{1,...,d-1\}$ there exists a constant $c=c(u,d)<\infty$ such that
for any $x,y\in {\mathbb R}^d$ with $|x-y|\ge 2 d$, 
\[
{\mathbb P}[\tilde{A}_n(x,y)]\le c |x-y|^{-(d-n)}.
\]
\end{prop}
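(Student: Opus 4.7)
The natural approach is a first-moment argument via Slivnyak--Mecke, combined with a discretization of the consecutive intersection points of the cylinders onto $\BZ^d$, followed by the ball-pair estimate of Proposition~\ref{prop:dballs} and the convolution bound of Lemma~\ref{l.convulest}. Since $\tilde A_n(x,y)=\bigcup_{i=1}^n A_i(x,y)$, a union bound (noting that for $|x-y|\ge 1$ the term with $i=n$ decays slowest) reduces everything to the single-index estimate $\BP[A_n(x,y)]\le c\,|x-y|^{-(d-n)}$.

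First I would bound $\BP[A_n(x,y)]$ by the expected number of ordered $n$-tuples of distinct lines $(L_1,\ldots,L_n)\in\omega^n_{\ne}$ witnessing $A_n$. The Slivnyak--Mecke formula (as used in Section~\ref{s.3dfinish}) rewrites this expectation as
\begin{equation*}
u^n\int_{A(d,1)^n}\mathbf{1}\{x\in\cc(L_1),\,y\in\cc(L_n),\,\cc(L_i)\cap\cc(L_{i+1})\ne\emptyset\text{ for }1\le i<n\}\prod_{i=1}^n\mu_{d,1}(dL_i).
\end{equation*}
To decouple this $n$-fold integral, pick, for any witnessing chain, some $w_i\in\cc(L_i)\cap\cc(L_{i+1})$ for $1\le i\le n-1$, and set $z_i:=\lfloor w_i\rfloor\in\BZ^d$, $z_0:=\lfloor x\rfloor$, $z_n:=\lfloor y\rfloor$. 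With $c_0:=1+\sqrt d$ we then have $L_i\in\CL_{B(z_{i-1},c_0)}\cap\CL_{B(z_i,c_0)}$ for every $i$. Bounding the integrand by a sum over $(z_1,\ldots,z_{n-1})\in(\BZ^d)^{n-1}$ of indicators that pin $L_i$ to the corresponding ball-pair, the integral splits as a product and, by Proposition~\ref{prop:dballs} (in the form generalized by its remark to radius $c_0$, together with the trivial bound $\mu_{d,1}(\CL_{B(0,c_0)})=c_0^{d-1}\kappa_{d-1}$ for small distances), each factor is at most $C\min(1,|z_{i-1}-z_i|^{-(d-1)})$ for some $C=C(d)<\infty$. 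Hence
\begin{equation*}
\BP[A_n(x,y)]\le (uC)^n\sum_{z_1,\ldots,z_{n-1}\in\BZ^d}\prod_{i=1}^n\min\bigl(1,|z_{i-1}-z_i|^{-(d-1)}\bigr).
\end{equation*}

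Since $n\le d-1$, Lemma~\ref{l.convulest} applies and gives $\BP[A_n(x,y)]\le c(u,d)\,|z_0-z_n|^{-(d-n)}$; the assumption $|x-y|\ge 2d$ yields $|z_0-z_n|\ge|x-y|/2$, giving the desired bound. The main obstacle I expect is the bookkeeping in the discretization step: one must verify that every configuration realizing $A_n(x,y)$ is indeed captured by some lattice-indexed pinned chain, and that the ball-pair measure admits the claimed uniform $\min(1,|z_{i-1}-z_i|^{-(d-1)})$ bound uniformly over all spacings (small and large), so that Lemma~\ref{l.convulest} can be applied term by term. Once that is done, the reduction to the cited ingredients is essentially mechanical.
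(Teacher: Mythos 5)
Your proposal is correct and follows essentially the same route as the paper: discretize the consecutive intersection points onto $\BZ^d$ with balls of radius $1+\sqrt d$, apply Slivnyak--Mecke to reduce to a product of ball-pair measures, bound each factor by $\min(1,|z_{i-1}-z_i|^{-(d-1)})$ via Proposition~\ref{prop:dballs}, and sum with Lemma~\ref{l.convulest} before the final union bound over $i\le n$. The bookkeeping concerns you flag (capturing every configuration by a lattice-pinned chain, and the uniform small/large-spacing bound) are exactly the points the paper handles, and they go through as you expect.
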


\begin{proof}
The proof follows the first part of the proof of Theorem $1$ in \cite{RS} closely.
Recall that we think of the integer lattice ${\mathbb Z}^d$ as embedded in ${\mathbb R}^d$.
Fix $n\in \{1,...,d-1\}$ and $x,y\in {\mathbb R}^d$ where $|x-y|\ge 2 d$.

For $v,w\in {\mathbb Z}^d$, let
$$T(v,w):={\mathcal L}_{B(v,\sqrt{d}+1)}\cap {\mathcal L}_{B(w,\sqrt{d}+1)}$$
and introduce the event
$$E(v,w):=\{\omega\left(T(v,w)\right)\ge 1\}.$$
For $z_0,\ldots,z_n \in \BZ^d$ we let $E(z_0,z_1)\circ E(z_1,z_2)\circ...\circ E(z_{n-1},z_n)$
denote the event that there exists distinct lines $L_1,\ldots,L_n$ such that $L_i\in E(z_{i-1},z_i)$
for every $i=1,\ldots,n.$
If $A_n(x,y)$ occurs, then there exist distinct lines $L_1,...,L_n$ in $\omega$ and
points $x_1,...,x_{n-1}\in {\mathbb R}^d$ such that
$$L_i\in {\mathcal L}_{B(x_{i-1},1)}\cap {\mathcal L}_{B(x_{i},1)}\mbox{ for }i=1,..,n,$$
where we put $x_0:=x$ and $x_n:=y$.
Since $|x-\lfloor x \rfloor|\le \sqrt{d}$ for any $x\in {\mathbb R}^d$, it follows that we also have
\[
L_i\in {\mathcal L}_{B(\lfloor x_{i-1}\rfloor,1+\sqrt{d})}
\cap {\mathcal L}_{B(\lfloor x_{i} \rfloor,1+\sqrt{d})}\mbox{ for }i=1,..,n.
\]
Therefore, we have shown the inclusion

\begin{equation}\label{e.Aincl}
A_n(x,y)\subset \bigcup_{z_1,...,z_{n-1}\in {\mathbb Z}^d}E(z_0,z_1)\circ E(z_1,z_2)\circ...\circ E(z_{n-1},z_n),
\end{equation}
where we let $z_0:=\lfloor x \rfloor$ and $z_n:= \lfloor y \rfloor$.
Let $\omega_{\neq}^n$ denote the set of all $n-$tuples of distinct lines $L_1,...,L_n$ in $\omega$. Then we have
\begin{equation}\label{e.indicatoreq}
{\bf 1}\{E(z_0,z_1)\circ...\circ E(z_{n-1},z_n)\}\le \sum_{\omega_{\neq}^n} \prod_{i=1}^{n}{\bf 1}\{L_i \in T(z_{i-1},z_{i})\}.
\end{equation}
Now a union bound together with~\eqref{e.Aincl} and~\eqref{e.indicatoreq} implies
\begin{equation}\label{e.Aupper1}
{\mathbb P}[A_n(x,y)]\le \sum_{z_1,...,z_{n-1}\in {\mathbb Z}^d}
{\mathbb E}\left[\sum_{\omega_{\neq}^n} \prod_{i=1}^{n}{\bf 1}\{L_i \in T(z_{i-1},z_{i})\}\right].
\end{equation}
According to the Slivnyak-Mecke formula (see \cite{SW} Corollary $3.2.3$), the expectation on the right hand side
of~\eqref{e.Aupper1} equals
\begin{eqnarray}\label{e.SlivMeck}
\lefteqn{\prod_{i=1}^{n}{\mathbb E}[\omega(T(z_{i-1},z_{i}))]
= u^n \prod_{i=1}^{n} \mu_{3,1}(T(z_{i-1},z_{i}))}\\
& &\le c(u,d)\prod_{i=1}^{n}\min(1,|z_{i-1}-z_{i}|^{-(d-1)}),\nonumber
\end{eqnarray}
where we applied Proposition~\ref{prop:dballs} in the last inequality.
From~\eqref{e.Aupper1} and~\eqref{e.SlivMeck} we get
\begin{eqnarray}\label{e.Aupper2}
\lefteqn{{\mathbb P}[A_n(x,y)]
\le c(u,d)\sum_{z_1,...,z_{n-1}\in {\mathbb Z}^d}\prod_{i=1}^{n} \min(1,|z_{i-1}-z_{i}|^{-(d-1)})}\\
& &\stackrel{~\eqref{e.mconvulest}}{\le} c(u,d) |\lfloor x \rfloor-\lfloor y \rfloor|^{-(d-n)}\le c'(u,d)|x-y|^{-(d-n)},\nonumber
\end{eqnarray}
whenever $n\in \{1,...,d-1\}$. Finally we get
\begin{eqnarray*}
\lefteqn{{\mathbb P}[\tilde{A}_n(x,y)] \le \sum_{k=1}^n{\mathbb P}[A_k(x,y)]}\\
& &\stackrel{~\eqref{e.Aupper2}}{\le} \sum_{k=1}^n c(u,d) |x-y|^{-(d-k)} \le c'(u,d) |x-y|^{-(d-n)}.
\end{eqnarray*}
\end{proof}

For $n\leq d-1,$ consider the event
\[
\bigcup_{x,y\in{\mathbb R}^d}\left(\tilde{A}_n(x,y)^c\cap \{x,y\in{\mathcal C}\}\right).
\]
In words: there exist points $x,y\in\CC$ which are not connected via any sequence of
$n$ cylinders if $n\le d-1$.
From Proposition~\ref{p.lbound} it is quite intuitive that the probability of this event should be 1.
We prove this
in full detail in the next corollary, which is in the spirit of the final part of the proof of Theorem
$2.1(ii)$ in \cite{LT}.
\begin{cor}\label{c.lbound}
For any $n\le d-1$ we have
\begin{equation}\label{e.asevent}
{\mathbb P}\left[\bigcup_{x,y\in{\mathbb R}^d}\left(\tilde{A}_n(x,y)^c\cap \{x,y\in{\mathcal C}\}\right)\right]=1.
\end{equation}
\end{cor}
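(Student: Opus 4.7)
The plan is a $0$--$1$ law argument based on translation invariance. Let $E$ denote the event in \eqref{e.asevent}. Although $E$ is defined as an uncountable union, I would first show that it coincides almost surely with the countable event
\[
E' := \bigl\{\exists L,L'\in\omega,\ L\neq L':\ {\rm Cdist}(\cc(L),\cc(L')) > n-2\bigr\}.
\]
The inclusion $E \subseteq E'$ is immediate: if $x,y\in\CC$ realise $\tilde{A}_n(x,y)^c$, pick any $L,L'\in\omega$ with $x\in\cc(L)$ and $y\in\cc(L')$, and observe that any chain of at most $n$ cylinders from $\cc(L)$ to $\cc(L')$ would witness $\tilde{A}_n(x,y)$. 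The reverse inclusion $E' \subseteq E$ uses that a.s.\ no two lines of $\omega$ are parallel, so for each $L\in\omega$ and each other $L''\in\omega$ the intersection $L\cap \cc(L'')$ is a bounded segment; a Fubini/Campbell computation on the central line $L$ then gives that the density of $L$ covered by other cylinders equals $p := 1 - e^{-u\kappa_{d-1}} < 1$, so points of $L$ lying in no other cylinder of $\omega$ exist a.s. Choosing such $x\in L$ and $y\in L'$ forces $\cc(L),\cc(L')$ to be the unique cylinders of $\omega$ containing $x,y$, so any chain realising $\tilde{A}_n(x,y)$ would necessarily run from $\cc(L)$ to $\cc(L')$, contradicting ${\rm Cdist}(\cc(L),\cc(L'))>n-2$.

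Next I would invoke ergodicity (in fact, mixing) of the Poisson line process under the $\BR^d$-action $L\mapsto L+v$. By the standard mixing criterion for Poisson point processes, this reduces to showing $\mu_{d,1}(\CL_K\cap \CL_{K+v})\to 0$ as $|v|\to\infty$ for every compact $K\subset\BR^d$, which follows from Proposition \ref{prop:dballs} by covering $K$ with finitely many unit balls and applying a union bound. Since $E=E'$ is manifestly translation-invariant, ergodicity yields $\BP[E]\in\{0,1\}$.

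It remains to show $\BP[E]>0$. Fix $x,y\in\BR^d$ at large distance. Since $\tilde{A}_n(x,y)\subseteq\{x,y\in\CC\}$ by definition,
\[
\BP[E]\ \geq\ \BP[x,y\in\CC] - \BP[\tilde{A}_n(x,y)].
\]
The events $\{x\in\CC\}$ and $\{y\in\CC\}$ are increasing in $\omega$, so FKG for Poisson point processes gives $\BP[x,y\in\CC]\geq p^2>0$ with $p$ as above. Proposition \ref{p.lbound} bounds the subtracted term by $c|x-y|^{-(d-n)}$, which tends to zero as $|x-y|\to\infty$ because $n\leq d-1$. Taking $|x-y|$ large enough yields $\BP[E]>0$, and the $0$--$1$ law then upgrades this to $\BP[E]=1$. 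The main obstacle is the measurability identification $E=E'$ (the passage from the uncountable union to a countable event), which requires the Fubini-type non-degeneracy argument on central lines described above; once this is in hand, the ergodic-theoretic conclusion is routine.
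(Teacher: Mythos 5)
Your argument reaches the right conclusion but by a genuinely different route from the paper. The paper's proof is direct and quantitative: it takes two blocks $K^1_R,K^2_R$ of $R$ integer points on the $e_1$-axis at mutual distance of order $e^R$, shows via disjoint line-sets $F_i$ that each block is hit by $\CC$ except with probability $e^{-cR}$, and kills all $R^2$ cross-pairs with a union bound and Proposition~\ref{p.lbound} (the bound $cR^2e^{-R}$); letting $R\to\infty$ produces the desired configuration inside a countable union of measurable events, which neatly sidesteps both ergodic theory and the measurability of the uncountable union. Your proof is softer: a $0$--$1$ law for the translation-invariant event $E'$, plus positivity at a single well-separated pair via FKG and Proposition~\ref{p.lbound}. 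The ingredients you invoke are all legitimately available --- mixing of the Poisson line process does follow from the decay in Proposition~\ref{prop:dballs} via the void-probability criterion, and FKG/Harris holds for Poisson processes --- so your approach buys conceptual economy (one pair of points instead of two growing blocks) at the price of importing these standard but unproved-in-the-paper facts.

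The one place where your write-up is genuinely thin is the identification $E'\subseteq E$, specifically the claim that a.s.\ \emph{every} line $L\in\omega$ carries a point lying in no other cylinder of $\omega$. The Fubini/Campbell computation only shows that the \emph{expected} uncovered length of $L\cap[0,T]$ is $(1-p)T>0$, hence that uncovered points exist with \emph{positive} probability; upgrading this to an almost sure statement for a fixed $L$ (and then to all $L\in\omega$ simultaneously via Slivnyak--Mecke) needs an extra step, e.g.\ the ergodic theorem for translations along $L$ or a second-moment argument using the correlation decay \eqref{e.corrdec}. The observation that each $L\cap\cc(L'')$ is a bounded segment does not by itself help, since $\mu_{d,1}(\CL_{\cc(L)})=\infty$ and infinitely many such segments accumulate on $L$. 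This gap is fillable, but it is precisely the kind of delicacy the paper's block construction avoids entirely.
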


\begin{proof}
Fix $n\le d-1$ and identify ${\mathbb Z}$ with the points along the $e_1$-axis with integer coordinates.
For $R\ge 1$  let $K^1_R$ denote the set ${\mathbb Z}\cap [1,R+1],$ $K^2_R$ denote the
set ${\mathbb Z}\cap [e^R,e^R+R]$ and define
\[
H_R:=\bigcup_{x\in K^1_R,y\in K^2_R}\left(\tilde{A}_n(x,y)^c\cap \{x,y\in {\mathcal C}\}\right).
\]
We will show that
\[
{\mathbb P}[H_R]\stackrel{R\to \infty}{\to}1,
\] which implies
\[
{\mathbb P}[\cup_{R\ge 1} H_R]=1.
\]
Then~\eqref{e.asevent} follows since
\[
\bigcup_{R\ge 1} H_R \subset
\bigcup_{x,y\in{\mathbb R}^d}\left(\tilde{A}_n(x,y)^c\cap \{x,y\in{\mathcal C}\}\right).
\]

Let $E^1_R$ be the event that there is no pair $x\in K_R^1$ and $y\in K_R^2$ for which $x,y\in {\mathcal C}$.
That is, we let
\begin{equation}\label{e.e1expr}
E^1_R:=\bigcap_{x\in K_R^1} \bigcap_{y\in K_R^2}\{x,y\in {\mathcal C}\}^c
=\{{\mathcal C}\cap K^1_R=\emptyset\}\cup \{{\mathcal C}\cap K^2_R=\emptyset\}.
\end{equation}
Also, introduce the event
\[
E_R^2:=\bigcup_{x\in K_R^1} \bigcup_{y\in K_R^2} \tilde{A}_n(x,y),
\]
which is the event that there exists $x\in K_R^1$ and $y\in K_R^2$ such that they are connected
via at most $n$ cylinders.
We have
\begin{equation}\label{e.HReq}
H_R^c  =\bigcap_{x\in K_R^1} \bigcap_{y\in K_R^2} (\{x,y\in {\mathcal C}\}\cap \tilde{A}_n(x,y)^c)^c= \bigcap_{x\in K_R^1} \bigcap_{y\in K_R^2}\left(\{x,y \in {\mathcal C}\}^c\cup \tilde{A}_n(x,y)\right).
\end{equation}
From~\eqref{e.HReq} we see that
\begin{equation*}
H_R^c \cap E_R^1 = E^1_R\mbox{ and } H_R^c \cap (E_R^1)^c \subset E_R^2.
\end{equation*}
The second inclusion follows since if $(E_R^1)^c$ occurs, then there must exist $x\in K_R^1$
and $y\in K_R^2$ such that $x,y\in \CC,$ and for $H_R^c$ to occur, $\tilde{A}_n(x,y)$
must occur for these $x,y.$
Hence
\[
H_R^c\subset E_R^1\cup E_R^2.
\]
We now argue that
\begin{equation}\label{e.eprobs}
\lim_{R\to \infty}{\mathbb P}[E^1_R]=0.
\end{equation}
For $i\in {\mathbb Z}$ let $B_i$ be the ball of radius 1 centered at $(i,0,\ldots,0)$ and 
$F_i={\mathcal L}_{B_i}\setminus ({\mathcal L}_{B_{i-1}}\cup {\mathcal L}_{B_{i+1}})$.
It is straightforward to see that if $L\in F_i\subset {\mathcal L}_{B_i}$,
then $L\notin {\mathcal L}_{B_j}$ for every $j\neq i$. Hence $(F_i)_{i\in {\mathbb Z}}$ is
a sequence of disjoint sets of lines. It is also easy to see that $\mu_{d,1}(F_i)=c_1(d)>0$.
We now get
\begin{eqnarray*}
\lefteqn{
{\mathbb P}[{\mathcal C}\cap K^1_R=\emptyset]
={\mathbb P}[\cap_{i\in K_R^1}\{\omega({\mathcal L}_{B_i})=0\}]\le {\mathbb P}[\cap_{i\in K_R^1}\{\omega(F_i)=0\}]}\\
& &=\Pi_{i\in {K_R^1}} {\mathbb P}[\omega(F_i)=0]= e^{-|K_R^1| c_1(d)} =e^{-c R}.
\end{eqnarray*}
In the same way, we get ${\mathbb P}[{\mathcal C}\cap K^2_R=\emptyset]\le e^{-c R}$
so~\eqref{e.e1expr}, a union bound, and letting $R\to \infty$ gives~\eqref{e.eprobs}.

For the event $E^2_R$ we have
\[
{\mathbb P}[E^2_R]\le \sum_{x\in K_R^1}\sum_{y\in K_R^2}
{\mathbb P}[\tilde{A}_n(x,y)]\stackrel{~\eqref{e.Aupper2}}{\le} c R^2 e^{-R}\to 0,
\]
as $R\to \infty$.
Hence,
\[
\lim_{R\to \infty}{\mathbb P}[H_R^c]
\le \lim_{R\to \infty}{\mathbb P}[E^1_R]+\lim_{R\to \infty}{\mathbb P}[E_R^2]=0,
\]
as required.
\end{proof}

{\bf Proof of Theorem \ref{thm:lbound}:}
Let $k\le d-3$. According to Corollary~\ref{c.lbound}, we can
a.s.\ find $x,y\in {\mathcal C}$ such that $x$ and $y$ are not connected via any sequence of $k+2$
cylinders. Since $x,y\in {\mathcal C}$, this means that there is a cylinder ${\mathfrak c}_1\in\omega$
(which contains $x$) and a cylinder ${\mathfrak c}_2\in\omega$ (which contains $y$) such that
${\mathfrak c}_1$ and ${\mathfrak c}_2$ are not connected via any sequence of $k$ cylinders.
\fbox{}\\

\section{Proof of Theorem~\ref{t.maind}, the upper bound when $d\geq 4$}\label{s.maind}

In this section we prove 
\begin{thm} \label{thm:ubound}
For any $d\geq4,$ $\BP(\diam(\CC)\leq d-2)=1.$
\end{thm}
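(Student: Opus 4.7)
The plan is to prove Theorem~\ref{thm:ubound} by a second moment argument. Fixing two lines $L_a, L_b \in A(d,1)$, I would construct a random variable $N = N_R$ counting ``admissible'' chains $(L_1,\dots,L_{d-2}) \in \omega_{\neq}^{d-2}$ satisfying the chain condition $\cc(L_a) \cap \cc(L_1) \neq \emptyset$, $\cc(L_i) \cap \cc(L_{i+1}) \neq \emptyset$ for $1\le i\le d-3$, and $\cc(L_{d-2}) \cap \cc(L_b) \neq \emptyset$, together with an auxiliary admissibility condition at scale $R$ (for example, requiring the contact point $\cc(L_a) \cap \cc(L_1)$ to lie on a prescribed long sub-interval of $L_a$, and imposing analogous spatial restrictions on the remaining intermediate cylinders) whose role is to make the Slivnyak--Mecke integrals converge and to tame the second moment.

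The first moment is computed directly by the Slivnyak--Mecke formula,
\[
\BE[N] \;=\; u^{d-2} \int_{A(d,1)^{d-2}} \ind\{\text{admissible chain}\} \, d\mu_{d,1}(L_1)\cdots d\mu_{d,1}(L_{d-2}),
\]
and evaluated by iteratively applying Proposition~\ref{prop:dballs}, Proposition~\ref{prop:ball-line1}, and an analogous measure bound for the set of lines meeting two cylinders (which, for $d\ge 4$ and generic skew axes at distance $r$, scales like $r^{-(d-3)}$). The outcome is that $\BE[N]$ grows at a polynomial rate in $R$.

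The central obstacle is the second moment bound $\BE[N^2] \le C\,\BE[N]^2$ with $C$ independent of $R$. I would expand $\BE[N^2]$ as a sum over ordered pairs of chains, classified by the subset of shared lines and by the matching identifying them. For each overlap pattern, Slivnyak--Mecke expresses the contribution as a multiple integral which is then estimated by iterated use of the preliminary measure bounds of Section~\ref{s.prelim}, combined with a convolution summation in the spirit of Lemma~\ref{l.convulest}. The disjoint-pair term reproduces $\BE[N]^2$; the main work is to verify that every nontrivial overlap pattern contributes at most $O(\BE[N]^2)$, and typically of strictly lower order. This step is delicate because the correlations in \eqref{e.corrdec} decay only polynomially, the combinatorics of overlap patterns grows with $d$, and the integrals sit near the edge of convergence---the admissibility condition in the definition of $N$ is designed precisely to keep these integrals tame.

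Once $\BE[N^2]\le C\,\BE[N]^2$ is in hand, Paley--Zygmund gives $\BP[N_R \ge 1]\ge 1/C>0$ uniformly in $R$. To upgrade this to probability one for the fixed pair $(L_a,L_b)$, I would split the admissible set of lines into many pairwise disjoint subfamilies (e.g.\ by partitioning the possible positions of the first contact into many far-apart long sub-intervals) and re-run the argument within each subfamily; because the Poisson line process restricted to disjoint measurable subsets of $A(d,1)$ yields independent Poisson processes, this produces arbitrarily many independent events each of probability $\ge 1/C$, so at least one succeeds almost surely. Finally, a Slivnyak--Mecke argument as in the proof of Theorem~\ref{thm:maind=3} promotes the statement from the fixed pair $(L_a,L_b)$ to every pair of cylinders in $\omega$ simultaneously; combined with Theorem~\ref{thm:lbound}, this yields Theorem~\ref{t.maind} for $d\ge 4$.
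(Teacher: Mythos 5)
Your overall strategy---restrict the chain spatially so the Mecke integrals converge, run a second moment/Paley--Zygmund argument, manufacture independent trials from disjoint subsets of $A(d,1)$, and finish with a Slivnyak--Mecke step to cover all pairs of cylinders---is exactly the architecture of the paper's proof in Section~\ref{s.maind}, where the count $X_{R,m}$ plays the role of your $N_R$ and the scales $m$ play the role of your disjoint subfamilies. However, the proposal leaves unspecified precisely the two components in which all of the difficulty lives, and the one concrete suggestion you make for the independence step would fail as stated.

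First, the independence upgrade. Partitioning only ``the possible positions of the first contact'' into far-apart sub-intervals does not make the resulting events independent: two chains whose first contacts lie in different sub-intervals can still share intermediate lines, so the events are not measurable with respect to disjoint restrictions of $\omega$. Because the model has only the polynomial correlation decay \eqref{e.corrdec} and no finite-energy or zero--one law is available here, a uniform bound $\BP[N_R\ge 1]\ge 1/C$ cannot be upgraded to probability one without genuine independence. To obtain it one must localize \emph{every} link of every chain and then prove that no single line can serve as a link in two different trials; in the paper this is Lemma~\ref{lemma:geometric1}, whose proof requires placing the $i$-th intermediate target at $p+NR^m e_{i+3}$ with $N=10d+1$ (distinct coordinate directions, geometrically separated scales) and carrying out a long sequence of slope comparisons. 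Second, the overlap analysis in the second moment. The dangerous terms are pairs of chains sharing a line, and to beat them one needs a quantitative geometric input: a line entering a given intermediate region can be reused ``longitudinally'' by at most $O(R^m)$ other chains, but by only $O(1)$ chains that use it for two \emph{consecutive} links. This is the Claim in the proof of Lemma~\ref{lemma:bound} (that $|S_1(B_i,\eta)\cap S_2(B_i,\eta)|=O(1)$), and it holds only because consecutive links are forced to meet at a uniformly large angle by the same placement of the intermediate regions. So ``design the admissibility condition so the integrals are tame'' is not a parameter to be tuned afterwards; exhibiting a layout for which the second-moment bound and the disjointness both hold simultaneously is the actual content of the theorem. (A smaller point: your cylinder--cylinder measure bound $r^{-(d-3)}$ is nowhere proved in the paper and is sensitive to the relative orientation of the axes; the paper's device of interpolating unit boxes between consecutive cylinders reduces everything to the ball--ball and ball--cylinder estimates of Propositions~\ref{prop:dballs} and~\ref{prop:ball-line1}, which is why no such bound is ever needed.)
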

\noindent
Obviously, Theorem ~\ref{t.maind} follows from Theorems \ref{thm:maind=3}, \ref{thm:lbound} and \ref{thm:ubound}.
Only the proof of Theorem \ref{thm:ubound} remains.

\medskip

The proof of Theorem \ref{thm:ubound} is fairly long. In order to facilitate the
reading, we will try to provide a short intuitive and very informal description
of the main underlying idea. We let a cylinder-path of length $k$ from $\cc_1$ to $\cc_2$
be a collection $\cc^1,\ldots,\cc^{k}$ of cylinders such that $\cc_1\cap \cc^1\neq \emptyset,$
$\cc^1 \cap \cc^2\neq \emptyset$ and so on. Assuming Theorem~\ref{thm:ubound}, there
should be plenty of such cylinder-paths from $\cc_1$ to $\cc_2$ using $d-2$ cylinders.
We will therefore look for collections of boxes $B_1,\ldots,B_{d-3}$ (of small sidelength) such
that $\cc^i$ and $\cc^{i+1}$ "meets" in $B_i,$ that is $\cc^i\cap\cc^{i+1}\cap B_i\neq \emptyset.$
Finding such collections are complicated by the longe-range dependencies of the line-process $\omega.$
Therefore, we will have to be very careful in the way 
we look for the boxes, in order to have enough independence for the proof to work. This will be done
by considering particular cylinder-paths on a sequence of different scales. \\

Before presenting the proof, we start with some definitions. 
For simplicity, we assume in this section that the radius of a cylinder is $\sqrt{d},$ 
the reason for this will be clear shortly and can be made without loss of generality.

Consider two arbitrary cylinders $\cc_1,\cc_2$ with 
centerlines $L_1,L_2\in A(d,1)$
respectively. Since for any two lines in $G(d,1)$, there is a plane that they belong to, we can without
loss of generality (due to the invariances of $\mu_{d,1}$) assume that $L_1=\{te_1:-\infty<t<\infty\}$ and that
$L_2=\{p+t(l_1,l_2,0,\ldots,0):-\infty<t<\infty\}$ where
$p=(0,0,p_3,p_4,\ldots,p_d).$ 

For any integers $m,R$ consider the boxes $B^1_{R^m},B^2_{R^m},\ldots,B^{d-3}_{R^m}$ where
$B^i_{R^m}=q_{i,m}+[-{R^m}/2,{R^m}/2]^d,$ $q_{i,m}=p+N{R^m}e_{i+3}$ and $N=10d+1.$ 
The reason for this choice of $N$ will become clear later. We will assume throughout that 
$R\geq 2 \max_{i=1,\ldots,d}|p_i|.$

We can tile the boxes $B^i_{R^m}$ in the canonical way with smaller boxes of sidelength 1.
We denote such boxes by $B_{i,m},$ that is $B_{i,m}\subset B_{R^m}^i$.  
Note that if the centerlines $L_a,L_b$
of cylinders ${\mathfrak c}_a,{\mathfrak c}_b$ both intersect a box $B$ of sidelength 1, then
since the radii of the cylinders are $\sqrt{d},$ we have ${\mathfrak c}_a\cap{\mathfrak c}_b\neq \emptyset.$
This is the reason for our choice of radius.

For any two sets $E_1,E_2\subset \BR^d,$ we let $E_1 \leftrightarrow E_2$ denote the event that
the Poisson process $\omega$ includes an element $L$ in the set $\CL_{E_1}\cap\CL_{E_2}.$ We will say that
$E_1,E_2$ are connected, and that $L$ connects $E_1$ and $E_2.$
Furthermore, we let $E_1 \stackrel{n}{\leftrightarrow} E_2$ denote the event that there are
exactly $n$ such connecting lines. It will greatly facilitate our analysis to consider disjoint 
parts of the cylinders $\cc_1,\cc_2$. Therefore, we define for every $m\geq 1,$
\[
\cc_{1,m}:=\{x\in \cc_1:R^{m-1}/2\leq d(\Pi_{L_1}(x),o)<R^m/2-10\sqrt{d}\},
\]
and
\[
\cc_{2,m}:=\{x\in \cc_2:R^{m-1}/2\leq d(\Pi_{L_2}(x),p)<R^m/2-10\sqrt{d}\}.
\]
Let $\vec{B}_m:=(B_{1,m},\ldots,B_{d-3,m})$ and let $\CP_m(\vec{B}_m)$ be shorthand for
$\{\cc_{1,m}\leftrightarrow B_{1,m}\leftrightarrow \ldots\leftrightarrow B_{d-3,m}\leftrightarrow\cc_{2,m}\}.$
We define
\[
X_{R,m}=\sum_{\vec{B}_m}I(\CP_m(\vec{B}_m)),
\]
where the sum is over all choices of $\vec{B}_m$ and $I$ is an indicator function.
Obviously, if $X_{R,m}>0,$ then $\cc_1,\cc_2$ are connected via a cylinder-path of length $d-2.$

We will prove the following two lemmas.
\begin{lem} \label{lemma:bound}
There exists a constant $c=c(u,d)>0$ such that for all $R$ 
large enough, $\BP(X_{R,m}>0)\geq c$ for every $m\geq 1.$
\end{lem}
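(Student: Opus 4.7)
The plan is to apply the Paley--Zygmund second-moment inequality
\[
\BP(X_{R,m} > 0) \geq \frac{(\BE X_{R,m})^2}{\BE X_{R,m}^2},
\]
and to show that $\BE X_{R,m}$ is bounded below and $\BE X_{R,m}^2$ is bounded above by positive constants depending only on $u$ and $d$, once $R$ is taken large. Both bounds will be uniform in $m$ because the construction is scale-invariant: the unit boxes tiling $B^i_{R^m}$ have sidelength $1$, but the inter-region distances are all of order $R^m$, so every relevant measure scales as a negative power of $R^m$ while the number of configurations scales as the matching positive power.

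For the first moment, I write $\BE X_{R,m} = \sum_{\vec{B}_m} \BP(\CP_m(\vec{B}_m))$. The event $\CP_m(\vec{B}_m)$ asks for $d-2$ lines in $\omega$: two cylinder-to-box lines ($\cc_{1,m}\leftrightarrow B_{1,m}$ and $B_{d-3,m}\leftrightarrow \cc_{2,m}$) and $d-4$ box-to-box lines in between. Because the regions $B^i_{R^m}$ lie along mutually orthogonal coordinate directions at distance $NR^m$ with $N = 10d+1$, and $\cc_{1,m},\cc_{2,m}$ are confined to well-separated annuli along $L_1,L_2$, a geometric check shows that the $d-2$ corresponding line-sets in $A(d,1)$ are pairwise disjoint: no single line can simultaneously realize two different connections. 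By independence of the Poisson line process on disjoint sets,
\[
\BP(\CP_m(\vec{B}_m)) = \prod_{j=1}^{d-2}\bigl(1 - e^{-u\mu_j}\bigr) \geq c(u,d)\prod_{j=1}^{d-2}\mu_j,
\]
where $\mu_j$ is the $\mu_{d,1}$-measure of the $j$-th line-set. Propositions \ref{prop:dballs} and \ref{prop:ball-line1} give $\mu_j$ of order $R^{-m(d-2)}$ for the two endpoint connections and $R^{-m(d-1)}$ for the $d-4$ interior ones, so each term has order $R^{-m[2(d-2)+(d-4)(d-1)]} = R^{-md(d-3)}$. The number of configurations $\vec{B}_m$ is $(R^{md})^{d-3} = R^{md(d-3)}$, and the two scalings cancel, yielding $\BE X_{R,m} \geq c_1(u,d) > 0$.

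For the second moment, I expand
\[
\BE X_{R,m}^2 = \sum_{\vec{B}_m,\vec{B}'_m} \BP\bigl(\CP_m(\vec{B}_m) \cap \CP_m(\vec{B}'_m)\bigr)
\]
and partition the sum according to the pairwise distances $r_i = \mathrm{dist}(B_{i,m}, B'_{i,m})$ for $i = 1,\ldots,d-3$. When every $r_i$ is large, the line-sets for $\vec{B}_m$ and $\vec{B}'_m$ at each coordinate are disjoint, and the Slivnyak--Mecke formula together with independence factors the joint probability into a product of single-path probabilities, yielding the main term comparable to $(\BE X_{R,m})^2$. When some $r_i$ is small, a single Poisson line can realize connections for both paths at once; this combinatorial gain must be offset by an extra decay factor in the measure. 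A discrete convolution bound in the spirit of Lemma \ref{l.convulest}, applied coordinate by coordinate using the $r^{-(d-1)}$ and $r^{-(d-2)}$ estimates of Propositions \ref{prop:dballs} and \ref{prop:ball-line1}, should control each such group and give $\BE X_{R,m}^2 \leq c_2(u,d)$.

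The main obstacle is the second-moment estimate. Unlike the first moment, one must carefully account for every overlap pattern between the two paths --- shared boxes, close-but-distinct boxes, and cases where a single Poisson line serves multiple connections across the two paths --- and verify that the combinatorial cost of each pattern is exactly offset by the polynomial decay from Propositions \ref{prop:dballs}--\ref{prop:ball-line1}. The choice $N = 10d+1$ and the orthogonal placement of the $B^i_{R^m}$'s are precisely what allow these overlap contributions to be bounded uniformly in $m$. Once the second moment is controlled, Paley--Zygmund yields $\BP(X_{R,m} > 0) \geq c_1^2 / c_2 > 0$, as required.
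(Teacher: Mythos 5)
Your overall strategy (Paley--Zygmund plus matching first-moment and configuration counts) is the same as the paper's, and your first-moment computation is correct: the disjointness of the $d-2$ line-sets is exactly Lemma~\ref{lemma:geometric1} restricted to a single scale, and the exponent bookkeeping $2(d-2)+(d-4)(d-1)=d(d-3)$ matches the number $R^{md(d-3)}$ of configurations. The gap is in the second moment, which you correctly identify as the crux but then propose to handle by a method that fails at its first step. You assert that when every $r_i=\mathrm{dist}(B_{i,m},B'_{i,m})$ is large, the line-sets for $\vec{B}_m$ and $\vec{B}'_m$ at each coordinate are disjoint. This is false: a single line through $B_{i,m}$ and $B_{i+1,m}$ also passes through $O(R^m)$ other unit boxes of $B^i_{R^m}$ and $O(R^m)$ other unit boxes of $B^{i+1}_{R^m}$, so $\CL_{B_{i,m}}\cap\CL_{B_{i+1,m}}$ and $\CL_{B'_{i,m}}\cap\CL_{B'_{i+1,m}}$ can intersect even when $B'_{i,m}$ is at distance of order $R^m$ from $B_{i,m}$. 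The dangerous configurations $\vec{B}'$ are not those close to $\vec{B}$ but those whose boxes lie on the (one-dimensional) traces of the lines realizing $\CP(\vec{B})$, and a partition by distance lumps these together with the overwhelming majority of harmless configurations at the same distance. For the same reason Lemma~\ref{l.convulest}, which sums $|z_i-z_{i+1}|^{-(d-1)}$ over all of $\BZ^d$, is not the right tool here; it is designed for the lower bound of Theorem~\ref{t.maind}, not for controlling these collinearity-driven correlations.

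The paper's resolution is different in an essential way: it splits $\omega=\eta\cup\xi$ with $\eta$ the restriction to the line-sets attached to $\vec{B}$, conditions on $\eta$ (after first reducing to the event $\CP_d(\vec{B})$ that each connection is realized by at most $d$ lines, which costs only $O(R^{-md(d-2)})$), and then counts, for each overlap pattern $(k,l)=(|\CE_{\vec{B}}|,|\CS_{\vec{B}}|)$, how many $\vec{B}'$ have boxes lying on the lines of $\eta$: $O(R^m)$ choices per box forced onto a single line of $\eta$, and only $O(1)$ choices for a box forced onto two lines of $\eta$, because the choice $N=10d+1$ and the orthogonal placement of the $B^i_{R^m}$ guarantee those two lines meet at an angle bounded away from zero (the ``Claim'' in the paper). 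Each lost degree of freedom in the count is then offset against the connection that $\xi$ no longer needs to supply, giving $\BE[X_{R,m}^2]\le\BE[X_{R,m}]^2+C\,\BE[X_{R,m}]$. To repair your argument you would need to replace the distance-based partition by this (or an equivalent) count of configurations lying on the realizing lines; as written, the proposed decomposition does not establish the second-moment bound.
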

\noindent
\begin{lem} \label{lemma:geometric1}
For any $R$ large enough, the sets
\[
\CL_{\cc_{1,1}}\cap \CL_{B^1_{R^1}},\CL_{B^1_{R^1}}\cap \CL_{B^2_{R^1}},\ldots,
\CL_{B^{d-4}_{R^1}}\cap \CL_{B^{d-3}_{R^1}},\CL_{B^{d-3}_{R^1}}\cap \CL_{\cc_{2,1}},
\CL_{\cc_{1,2}}\cap \CL_{B^1_{R^2}},\ldots,\CL_{\cc_{1,m}}\cap \CL_{B^1_{R^m}},\ldots
\]
are mutually disjoint. 
\end{lem}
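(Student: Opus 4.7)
The proof is purely geometric. For a line $L$ in one of the listed sets $\CL_X\cap\CL_Y$, fix $a\in X\cap L$ and $b\in Y\cap L$; then the unit direction $u=(b-a)/|b-a|$ is nearly determined by the ``centers'' of $X$ and $Y$. Concretely, for $\CL_{\cc_{1,m}}\cap \CL_{B^1_{R^m}}$ we get $u\approx\pm e_4$; for $\CL_{B^i_{R^m}}\cap\CL_{B^{i+1}_{R^m}}$ we get $u\approx\pm (e_{i+4}-e_{i+3})/\sqrt 2$; and for $\CL_{B^{d-3}_{R^m}}\cap\CL_{\cc_{2,m}}$ we get $u\approx\pm e_d$. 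In each case the approximation error is $O(1/N)$ uniformly in $m$, which is small thanks to $N=10d+1$: $b-a$ has a component of size $\approx NR^m$ in the dominant direction while all other components are at most $O(R^m)$ (and even smaller for the cylinder-side contributions, which are bounded by the cylinder radius $\sqrt d$, and using the hypothesis $R\ge 2\max_i|p_i|$ to control the shift coming from $p$).

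The disjointness claim splits into two kinds of cases. For sets at the \emph{same} scale $m$, the approximate directions listed above are pairwise non-parallel unit vectors in $\BR^d$ whose pairwise angular separations are bounded below by a positive constant depending only on $d$. The choice $N=10d+1$ then guarantees that the $O(1/N)$ error cones around these directions are pairwise disjoint, so no single line can lie in two distinct same-scale sets.

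For sets at \emph{different} scales $m<m'$, the argument is similar but needs an extra ingredient when the two sets have the same ``type'' (so the same approximate direction). In that case one uses positional information: a line $L$ in two same-type sets at scales $m$ and $m'$ contains points near the respective box centers, whose displacement is $\approx N(R^{m'}-R^m)\,e_{i+3}$. This displacement must be a scalar multiple of $L$'s direction $u\approx (e_{i+4}-e_{i+3})/\sqrt 2$; reading off the scalar from the $e_{i+3}$-component gives $\approx -\sqrt 2\,NR^{m'}$, while reading it from the $e_{i+4}$-component gives only $O(R^{m'})$, a contradiction for $N=10d+1$. A parallel argument handles cross-scale sets involving $\cc_{1,\cdot}$ or $\cc_{2,\cdot}$, where one additionally uses the bound $R^{m-1}/2\le |\Pi_{L_j}(\cdot)|<R^m/2-10\sqrt d$ built into the definitions of $\cc_{j,m}$ to separate portions at different scales by at least $10\sqrt d$ along the cylinder axes; this is precisely what prevents a line whose direction is nearly $\pm e_4$ (forced by its intersection with $B^1_{R^m}$) from simultaneously meeting $\cc_{1,m}$ and $\cc_{1,m'}$.

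The main obstacle is the case-by-case bookkeeping: there are $d-2$ types of listed sets per scale and countably many scales, and one must rule out every pair of distinct listed sets by one of the two mechanisms above. All cases, however, reduce to verifying that $N=10d+1$, $R$ sufficiently large, and the hypothesis $R\ge 2\max_i|p_i|$ make the relevant directional or positional inequalities strict, which is mechanical once the dominant-direction calculations above are in hand.
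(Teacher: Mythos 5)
Your proposal is, in substance, the same proof as the paper's. The paper organizes the verification into six displayed disjointness statements (\eqref{e.dis1}--\eqref{e.disCBCB}) and, for each pair of sets, compares two coordinate ratios of directional vectors $v=x-x'$ with $x,x'$ taken in the two defining sets, showing that one ratio is $O(1/N)$ while the other is at least of order $N$; this is exactly your ``narrow cone around a canonical direction'' mechanism, with $N=10d+1$ playing the same role. Your positional argument for same-type pairs at different scales corresponds to the paper's auxiliary vector $v_3$ in the case $i=j$, $m\neq n$ of \eqref{e.dis2}, and your use of the $10\sqrt{d}$ gap built into the definition of $\cc_{j,m}$ is precisely how the paper proves \eqref{e.dis1} and \eqref{e.dis12}. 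Your packaging (a uniform list of canonical directions, then two mechanisms: angular separation for distinct types, positional separation for equal types) is a little cleaner than the paper's case-by-case ratio computations, but it buys nothing essentially new.

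One concrete gap, which you in fact share with the paper: your same-scale argument rests on the claim that the canonical directions are pairwise non-parallel, and this fails for $d=4$. There the list at scale $m$ degenerates to the two sets $\CL_{\cc_{1,m}}\cap\CL_{B^1_{R^m}}$ and $\CL_{B^{d-3}_{R^m}}\cap\CL_{\cc_{2,m}}$ with $B^{d-3}_{R^m}=B^1_{R^m}$, so both canonical directions equal $e_4=e_d$ and the cone argument gives nothing; the cross-scale version of this pair has the same problem. (The paper's proof of \eqref{e.disCBCB} compares coordinates $4$ and $d$ and therefore also degenerates when $d=4$; note that for, say, $L_2$ parallel to $L_1$ at distance $6$ in $d=4$, the line $\{(1,0,0,t):t\in\BR\}$ meets $\cc_{1,1}$, $\cc_{2,1}$ and $B^1_{R}$, so this pair genuinely requires either an extra hypothesis on $L_1,L_2$ or a different argument.) For $d\ge 5$ your two mechanisms do cover every pair of listed sets, and the remaining work is the mechanical verification you describe.
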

Before presenting the proofs of these lemmas, we will show how Theorem ~\ref{thm:ubound} 
follows from them. \\

\noindent 
{\em Proof of Theorem~\ref{thm:ubound}} 
We observe that Lemma \ref{lemma:geometric1} implies that $\{X_{R,m}\}_{m \geq 1}$
is a sequence of independent random variables, as $X_{R,m}$ is defined only in terms of the sets
\begin{equation} \label{eqn:disjointsets}
\CL_{\cc_{1,m}}\cap \CL_{B^1_{R^m}},\CL_{B^1_{R^m}}\cap \CL_{B^2_{R^m}},\ldots,
\CL_{B^{d-4}_{R^m}}\cap \CL_{B^{d-3}_{R^m}},\CL_{B^{d-3}_{R^m}}\cap \CL_{\cc_{2,m}}.
\end{equation}
From this and Lemma \ref{lemma:bound} it follows via Borel-Cantelli, that 
\[
\BP[\exists m\geq1: X_{R,m}>0]=1,
\]
so that $\cc_1$ and $\cc_2$ are connected via at most $d-2$ cylinders.
It only remains to show that a.s every pair of cylinders $\cc_1,\cc_2\in\omega$ are 
also connected by at most $d-2$ cylinders.
However, this is completely analogous to the proof in the case $d=3$, so we will be brief. 
For lines $L_1,L_2\in A(d,1)$, let
\[
E(L_1,L_2)=\{\mbox{Cdist}({\mathfrak c}(L_1),{\mathfrak c}(L_2))\le d-2\},
\]
and let
\[
D=\cap_{(L_1,L_2)\in \omega_{\neq}^2}E(L_1,L_2).
\]
As in the proof of the case $d=3$, we can show that $\BP[D]=1,$ which implies
the theorem.
\fbox{}\\

We proceed by proving Lemma \ref{lemma:geometric1} as it will also be useful in proving Lemma \ref{lemma:bound}.
\\

\noindent
{\em Proof of Lemma \ref{lemma:geometric1}.}
Throughout the proof, we keep in mind that $|p_i|\le R/2$ for $i=1,...,d$. The lemma will follow if we show the following six statements for $1\le i,j\le d-4$, $m,n\ge 1$ and $R$ large enough:
\begin{equation}\label{e.dis1}
\CL_{\cc_{1,m}}\cap \CL_{B^1_{R^m}}\mbox{ and }\CL_{\cc_{1,n}}\cap \CL_{B^1_{R^{n}}}\mbox{ are disjoint when }m\neq n.
\end{equation}
\begin{equation}\label{e.dis12}
\CL_{\cc_{2,m}}\cap \CL_{B^{d-3}_{R^m}}\mbox{ and }\CL_{\cc_{2,n}}\cap \CL_{B^{d-3}_{R^{n}}}\mbox{ are disjoint when }m\neq n.
\end{equation}
\begin{equation}\label{e.dis2}
\CL_{B^i_{R^m}}\cap \CL_{B^{i+1}_{R^m}}\mbox{ and }\CL_{B^j_{R^n}}\cap \CL_{B^{j+1}_{R^n}}\mbox{ are disjoint unless }m=n\mbox{ and }i=j.
\end{equation}
\begin{equation}\label{e.disCBBB}
\CL_{\cc_{1,m}}\cap \CL_{B_{R^m}^1}\mbox{ and }\CL_{B_{R^n}^i}\cap \CL_{B_{R^n}^{i+1}}\mbox{ are disjoint.}
\end{equation}
\begin{equation}\label{e.c2dis}
\CL_{\cc_2,m}\cap \CL_{B^{d-3}_{R^m}}\mbox{ and }\CL_{B^i_{R^n}}\cap\CL_{B^{i+1}_{R^n}} \mbox{ are disjoint.}
\end{equation}
\begin{equation}\label{e.disCBCB}
\CL_{\cc_{1,n}}\cap \CL_{B^1_{R^n}}\mbox{ and }\CL_{\cc_{2,m}}\cap \CL_{B^{d-3}_{R^m}}\mbox{ are disjoint.}
\end{equation}

We start with~\eqref{e.dis1}. It suffices to show that ${\CL_{\cc_{1,m}}\cap }\CL_{\cc_{1,n}}$
and $\CL_{\cc_{1,m}}\cap \CL_{B^i_{R^m}}$ are disjoint when $m\neq n$.
Suppose that $L\in \CL_{\cc_{1,m}}\cap \CL_{\cc_{1,n}}$. Let $(k_1,...,k_d)$ be a directional vector of $L$. 
Observe that since $L$ intersects both $\cc_{1,m}$ and $\cc_{1,n}$, we have that for some $z\in \cc_{1,m}$ 
and some $z'\in \cc_{1,n}$,
$$\frac{k_{4}}{k_1}=\frac{z_{4}-z_{4}'}{z_1-z_1'}.$$
Hence, (recall that the radii of the cylinders are $\sqrt{d}$)
\begin{equation}\label{e.slope1}
\left|\frac{k_{4}}{k_1}\right|=\left|\frac{z_{4}-z_{4}'}{z_1-z_1'}\right| 
\le \frac{\sqrt{d}-(-\sqrt{d})}{10\sqrt{d}}=1/5.
\end{equation}
Now suppose also that $L\in \CL_{\cc_{1,m}}\cap \CL_{B^1_{R^m}}$. 
We have that for some $z\in\cc_{1,m}$ and some $z'\in B_{R^m}^1$, 
\begin{equation}\label{e.slope2}
\left|\frac{k_{4}}{k_1}\right|= \left|\frac{z_{4}-z'_{4}}{z_1-z_1'}\right|
\ge\left|\frac{p_{4}+(N-1/2)R^m-\sqrt{d}}{-R^m/2-(R^m/2-10\sqrt{d})}\right|\ge N-2,
\end{equation}
provided that $R$ is large enough. Since~\eqref{e.slope1} and~\eqref{e.slope2} cannot both hold, we get~\eqref{e.dis1}. 

The proof of~\eqref{e.dis12} is similar to, but just slightly more technical than the proof of~\eqref{e.dis1} 
since ${\cc_2}$ does not run along a coordinate axis. The details are left to the reader.

Next we establish~\eqref{e.dis2}. To this end, suppose that 
$L_1\in \CL_{B^i_{R^m}}\cap \CL_{B^{i+1}_{R^m}}$, $L_2 \in \CL_{B^j_{R^n}}\cap \CL_{B^{j+1}_{R^n}}$ 
and $L_3\in \CL_{B^i_{R^m}}\cap \CL_{B^j_{R^n}}$. We will show that $L_1$, $L_2$ and $L_3$ cannot all 
be the same line, by showing that at least one of their corresponding directional vectors is linearly 
independent of the two others. Then~\eqref{e.dis2} follows. Let $x\in L_1 \cap B^i_{R^m}$, 
$x'\in L_1\cap B^{i+1}_{R^m}$, $y\in L_2\cap B^j_{R^n}$, $y'\in L_2\cap B^{j+1}_{R^n}$, $z\in L_3 \cap B^i_{R^m}$ 
and $z'\in L_3 \cap B^j_{R^n}$. Let $v_1=x-x'$, $v_2=y-y'$ and $v_3=z-z'$. Then $v_i$ is a directional vector 
of $L_i$. Observe that for some $\alpha,\alpha'\in [-R^m/2,R^m/2]^d$
$$v_1=(\alpha_1-\alpha_1',...,\alpha_{i+3}-\alpha_{i+3}'+N R^m,\alpha_{i+4}-\alpha_{i+4}'-N R^m,...,\alpha_d-\alpha_d'),$$
and for some $\beta,\beta'\in [-R^n/2,R^n/2]^d$,
$$v_2=(\beta_1-\beta_1',...,\beta_{j+3}-\beta_{j+3}'+N R^n,\beta_{j+4}-\beta_{j+4}'-N R^n,...,\beta_d-\beta_d').$$ 
We will only make use of the vector $v_3$ in the case $i=j$. If $i=j$, then for some $\gamma\in [-R^m/2,R^m/2]^d$ 
and $\gamma'\in [-R^n/2,R^n/2]^d,$
$$v_3=(\gamma_1-\gamma_1',...,\gamma_{i+3}-\gamma_{i+3}'+N R^m,\gamma_{i+4}-\gamma_{i+4}'-N R^n,...,\gamma_d-\gamma_d').$$
We will now consider different cases.\\
{\bf Case $i\neq j$, $m,n$ arbitrary:} Without loss of generality, suppose $i>j$. Then
\begin{equation}\label{e.vdir4}
\left|\frac{(v_2)_{i+4}}{(v_1)_{i+4}}\right|=\left|\frac{\beta_{i+4}-\beta_{i+4}'}{\alpha_{i+4}-\alpha_{i+4}'-N R^m}\right|\le \frac{R^n}{N R^m-R^m}=\frac{R^{n-m}}{N-1}.
\end{equation} 
On the other hand
\begin{equation}\label{e.vdir5}
\left|\frac{(v_2)_{j+3}}{(v_1)_{j+3}}\right|=\left|\frac{\beta_{j+3}-\beta_{j+3}'+N R^n}{\alpha_{j+3}-\alpha_{j+3}'}\right|\ge \frac{N R^n-R^n}{R^m}=(N-1) R^{n-m}
\end{equation}
From~\eqref{e.vdir4} and~\eqref{e.vdir5} it follows that 
\begin{equation}\label{e.vdir6}
\left|\frac{(v_2)_{i+4}}{(v_1)_{i+4}}\right|\neq \left|\frac{(v_2)_{j+3}}{(v_1)_{j+3}}\right|,
\end{equation} 
implying that $v_1$ and $v_2$ are linearly independent. Hence, $L_1$ and $L_2$ are different lines.
{\bf Case $i=j$, $m\neq n$:}
Without loss of generality assume that $n>m$. We get
\begin{equation}\label{e.vdir1}
\left|\frac{(v_2)_{j+4}}{(v_3)_{j+4}}\right|=\left|\frac{\beta_{j+4}-\beta_{j+4}'-N R^n}{\gamma_{j+4}-\gamma_{j+4}'-N R^n}\right|\le \frac{N R^n+R^n}{N R^n-R^n/2-R^m/2}\le \frac{N+1}{N-1},
\end{equation}
using $n>m$ in the last inequality. We also have
\begin{equation}\label{e.vdir2}
\left|\frac{(v_2)_{j+3}}{(v_3)_{j+3}}\right|=\left|\frac{\beta_{j+3}-\beta_{j+3}'+N R^n}{\gamma_{j+3}-\gamma_{j+3}'+N R^m}\right|\ge \frac{N R^n-R^n}{N R^m+R^m/2+R^n/2}\ge \frac{(N-1)R^n}{N R^m+R^n}\ge 0.9 (N-1),
\end{equation}
when $R$ is large enough, since $n>m$. Hence, when $R$ is large and by the choice of $N$,
\begin{equation}\label{e.vdir3}
\left|\frac{(v_2)_{j+4}}{(v_3)_{j+4}}\right|\neq \left|\frac{(v_2)_{j+3}}{(v_3)_{j+3}}\right|.
\end{equation}
It follows that $v_2$ and $v_3$ are linearly independent for $R$ large enough, implying that $L_2$ and $L_3$ are not the same line. 

We move on to show~\eqref{e.disCBBB}. Let $L_4\in \CL_{\cc_{1,m}}\cap \CL_{B_{R^m}^1}$ and $L_5\in\CL_{B_{R^n}^i}\cap \CL_{B_{R^n}^{i+1}}$. Let $x\in L_4 \cap \cc_{1,m}$, $x'\in L_4 \cap B_{R^m}^1$, $y\in L_5\cap  B_{R^n}^i$ and $y'\in L_5\cap B_{R^n}^{i+1}$. Let $v_4=x-x'$ and $v_5=y-y'$ be directional vectors for $L_4$ and $L_5$ respectively. Then, since $\cc_{1,m}\subset [-R^m/2,R^m/2]^d$, for some $\alpha,\alpha'\in [-R^m/2,R^m/2]^d$,
$$v_4=(\alpha_1-p_1-\alpha'_1,...,\alpha_4-p_4-\alpha'_4-N R^m,...,\alpha_d-p_d-\alpha'_d)$$
and for some $\beta,\beta'\in[-R^n/2,R^n/2]^d$,
$$v_5=(\beta_1-\beta_1',...,\beta_{i+3}+N R^n-\beta_{i+3}',\beta_{i+4}-N R^n-\beta_{i+4}',...,\beta_d-\beta_d').$$
Suppose first that $i=1$. Then
\begin{equation}\label{e.CBBB1}
\left|\frac{(v_4)_4}{(v_5)_4}\right|=\left|\frac{\alpha_4-p_4-\alpha'_4-N R^m}{\beta_{4}+N R^n-\beta_{4}'}\right|\ge \frac{(N-2)R^m}{(N+1) R^n}=\left(\frac{N-2}{N+1}\right)R^{m-n}
\end{equation}
and
\begin{equation}\label{e.CBBB2}
\left|\frac{(v_4)_5}{(v_5)_5}\right|= \left|\frac{\alpha_5-\alpha'_5-p_5}{\beta_5-N R^n-\beta'_5}\right|\le \frac{2 R^m}{(N-1)R^n}=\left(\frac{2}{N-1}\right)R^{m-n}.
\end{equation}
By the choice of $N$, we see that~\eqref{e.CBBB1} and~\eqref{e.CBBB2} are mutually exclusive, so~\eqref{e.disCBBB} follows in the case $i=1$. Suppose instead $2\le i \le d-4$. Then for $R$ large enough,
\begin{equation}\label{e.CBBB3}
\left|\frac{(v_4)_4}{(v_5)_4}\right|=\left|\frac{\alpha_4-p_4-\alpha'_4-NR^m}{\beta_4-\beta_4'}\right|\ge (N-2)R^{m-n}, 
\end{equation}
 and
\begin{equation}\label{e.CBBB4}
\left|\frac{(v_4)_{i+3}}{(v_5)_{i+3}}\right|=\left|\frac{\alpha_{i+3}-p_{i+3}-\alpha_{i+3}'}{\beta_{i+3}+N R^n-\beta_{i+3}'}\right|\le \frac{1.5 R^{m-n}}{N-1}.
\end{equation}
Since~\eqref{e.CBBB3} and~\eqref{e.CBBB4} are mutually exclusive, we get~\eqref{e.disCBBB} also in the case $i\neq 1$.

The statement~\eqref{e.c2dis} follows analogously. 

Next, we show~\eqref{e.disCBCB}. We do this by showing that
$$\CL_{\cc_{1,n}}\cap \CL_{B^1_{R^n}}\mbox{ and }\CL_{\cc_{1,n}}\cap \CL_{B^{d-3}_{R^m}}\mbox{ are disjoint.}$$
Suppose that $L_6\in \CL_{\cc_{1,n}}\cap \CL_{B^1_{R^n}}$ and $L_7\in \CL_{\cc_{1,n}}\cap \CL_{B^{d-3}_{R^m}}$. Let $x\in L_6\cap\cc_{1,n}$, $x'\in L_6\cap B^1_{R^n}$, $y\in L_7\cap \cc_{1,n}$, and $y'\in L_7\cap B^{d-3}_{R^m}$. Let $v_6=x-x'$ and $v_7=y-y'$ be directional vectors of $L_6$ and $L_7$ respectively. Observe that for some $\alpha\in [-R^n/2,R^n/2]^d$ and some $\beta\in  [-R^m/2,R^m/2]^d$ we have
$$v_6=(x_1-p_1-\alpha_1,...,x_4-p_4-\alpha_4-N R^n,...,x_d-p_d-\alpha_d)$$
and
$$v_7=(y_1-p_1-\beta_1,...,y_d-p_d-\beta_d-N R^m).$$
Since $|x_i|,|y_i|\le\sqrt{d}$ for $i=2,...,d$, we get
\begin{equation}\label{e.CBCB1}
\left|\frac{(v_6)_4}{(v_7)_4}\right|=\left|\frac{x_4-p_4-\alpha_4-N R^n}{y_4-p_4-\beta_4}\right|\ge \frac{(N-2)R^n-\sqrt{d}}{\sqrt{d}+R^m}\ge (N-3) R^{n-m},
\end{equation}
provided that $R$ is large enough. We also get 
\begin{equation}\label{e.CBCB2}
\left|\frac{(v_6)_d}{(v_7)_d}\right|=\left|\frac{x_d-p_d-\alpha_d}{y_d-p_d-\beta_d-N R^m}\right|\le \frac{R^n+\sqrt{d}}{(N-2)R^m-\sqrt{d}}\le \frac{R^{n-m}}{N-3},
\end{equation}
when $R$ is large enough. Since~\eqref{e.CBCB1} and~\eqref{e.CBCB2} cannot both hold, we get~\eqref{e.disCBCB}. This completes the proof of the lemma.
\fbox{}\\

In much of what follows, whenever $m$ can be considered fixed, we will simply write $B_1,B_i,\ldots.$
instead of $B_{1,m},B_{i,m},\ldots$ Furthermore, we will say that $f(R)=\Omega(R^{\alpha}),$
if there exists two constants $0<c<C<\infty,$ such that $cR^\alpha\leq f(R) \leq CR^{\alpha}$
for all $R$ large enough.

We will need the following lemma. We formulate it in exactly the way that we will use it, rather than in the
most general way possible. 
\begin{lem}\label{lemma:ball-line}
For any $d\geq 4, m\geq 1$ and boxes $B_1\subset B^1_{R^m},B_{d-3}\subset B^{d-3}_{R^m}$ of sidelength 1,
\begin{equation} \label{eqn:boxcylest}
\BP[\cc_{1,m}\leftrightarrow B_1]=\Omega(R^{-m(d-2)}), \textrm{ and } 
\BP[\cc_{2,m}\leftrightarrow B_{d-3}]=\Omega(R^{-m(d-2)}).
\end{equation}
Furthermore, for any $d\geq 5, i=1,\ldots,d-4$ and pair of boxes $(B_i,B_{i+1})$ of sidelengths 1 such that 
$B_i\subset B^i_{R^m}$ and $B_{i+1}\subset B^{i+1}_{R^m}$
\begin{equation} \label{eqn:boxboxest}
\BP[B_{i}\leftrightarrow B_{i+1}]=\Omega(R^{-m(d-1)}).
\end{equation}
\end{lem}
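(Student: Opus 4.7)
The plan is to convert the two probabilities into measure estimates and invoke the Poisson structure. Since the number of lines of $\omega$ meeting both $E_1$ and $E_2$ is Poisson with mean $\mu := u\,\mu_{d,1}(\CL_{E_1}\cap\CL_{E_2})$, we have $\BP[E_1\leftrightarrow E_2] = 1-e^{-\mu}$; the measure estimates below will give $\mu = O(R^{-m(d-2)})$ in both cases (hence bounded), so $\BP[E_1\leftrightarrow E_2]$ is of the same order as $\mu$, and it suffices to prove two-sided $\Omega$-bounds for the underlying $\mu_{d,1}$-measures.

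The box--box bound \eqref{eqn:boxboxest} is immediate: $q_{i,m}=p+NR^m e_{i+3}$ and $q_{i+1,m}=p+NR^m e_{i+4}$ are orthogonal translates of $p$, so for $R\geq 2\max_j|p_j|$ and $N=10d+1$ every pair $x\in B_i$, $y\in B_{i+1}$ satisfies $|x-y|=\Omega(R^m)$. Each unit box contains a ball of radius $1/2$ and is contained in one of radius $\sqrt{d}/2$, so applying Proposition~\ref{prop:dballs} (in the extension to arbitrary fixed radii mentioned in the remark following it) to the inscribed and circumscribed balls yields the required $\Omega(R^{-m(d-1)})$ bound.

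For the ball--cylinder bound \eqref{eqn:boxcylest}, the upper estimate follows from Proposition~\ref{prop:ball-line1}: the box $B_1$ sits at distance $\Omega(R^m)$ from $L_1=\BR e_1$ (dominated by the $e_4$-separation $NR^m$), so the circumscribed ball of $B_1$ together with $\cc_1\supseteq\cc_{1,m}$ already give $\mu_{d,1}(\CL_{\cc_{1,m}}\cap \CL_{B_1}) = O(R^{-m(d-2)})$. For the lower bound I would mimic the lower-half of the proof of Proposition~\ref{prop:ball-line1}: choose disjoint balls $D_j := B((j,0,\ldots,0),1/8)\subseteq \cc_{1,m}$ for integers $j\in [R^{m-1}/2+1, R^m/2-10\sqrt{d}-1]$, of which there are $\Omega(R^m)$. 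A slope computation on direction vectors $(k_1,\ldots,k_d)$ shows that any line meeting two distinct $D_j$'s has $|k_4/k_1|\leq 1/3$, while any line meeting one $D_j$ and $B_1$ has $|k_4/k_1|\geq 2(N-1)/3$ (since $B_1$ has $e_4$-coordinate of order $NR^m$ whereas the $e_1$-separation between $B_1$ and any $D_j$ is at most $(3/2)R^m$); by the choice $N=10d+1$ these conditions are incompatible for $R$ large, so the sets $\CL_{D_j}\cap\CL_{B_1}$ are pairwise disjoint. Each summand $\mu_{d,1}(\CL_{D_j}\cap\CL_{B_1})$ is $\Omega(R^{-m(d-1)})$ by Proposition~\ref{prop:dballs} (since the distance from $D_j$ to $B_1$ is $\Theta(R^m)$), and summing over $j$ gives $\Omega(R^m\cdot R^{-m(d-1)}) = \Omega(R^{-m(d-2)})$. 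The bound for $\cc_{2,m}$ reduces to the same calculation after rotating $L_2$ to a coordinate axis.

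The principal obstacle is the slope/disjointness calculation for the ball--cylinder lower bound: the geometric conditions must be tracked carefully through the choice $N=10d+1$, the admissible range of $j$, and the fact that $B_1$ is displaced far from $L_1$ in the $e_4$-direction rather than along the cylinder's axis. This is the direct analogue for our setting of the slope argument displayed around equations~\eqref{e.lcontr1}--\eqref{e.lcontr2} in the proof of Proposition~\ref{prop:ball-line1}; everything else in the lemma is a routine application of Propositions~\ref{prop:dballs} and~\ref{prop:ball-line1}.
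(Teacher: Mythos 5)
Your proposal is correct and follows essentially the same route as the paper: the upper bounds come from Proposition~\ref{prop:ball-line1} (respectively Proposition~\ref{prop:dballs} with inscribed/circumscribed balls) applied at distance $\Theta(R^m)$, and the lower bound for $\cc_{1,m}\leftrightarrow B_1$ is obtained exactly as in the paper by summing $\mu_{d,1}(\CL_{D_j}\cap\CL_{B_1})$ over $\Omega(R^m)$ disjoint small balls along the cylinder axis, with the same $|k_4/k_1|$ slope dichotomy (small for lines hitting two $D_j$'s, of order $N$ for lines hitting a $D_j$ and $B_1$) establishing disjointness of the line sets. The conversion between $\mu_{d,1}$-measure and probability via $1-e^{-u\mu}$ is also the paper's argument, so there is nothing to flag.
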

\noindent
{\bf Remark:}
There are obvious similarities between this lemma and Propositions \ref{prop:dballs}
and \ref{prop:ball-line1}. These propositions will also be used explicitly in the proof.

\medskip

\noindent
{\bf Proof.}
We begin by proving \eqref{eqn:boxboxest}. We note that the distance $d(B_i,B_{i+1})$ 
between the centers of $B_i$ and $B_{i+1}$ can be bounded by 
\begin{eqnarray*}
\lefteqn{d(B_i,q_{i,m})+d(q_{i,m},q_{i+1,m})+d(q_{i+1,m},B_{i+1})}\\
& & \leq \sqrt{d}R^m/2+\sqrt{2}NR^m+\sqrt{d}R^m/2=R^m(\sqrt{2}N+\sqrt{d}).
\end{eqnarray*}
As the boxes contain balls of radius 1, we can use Proposition \ref{prop:dballs} to conclude that 
\[
\mu_{d,1}(\CL_{B_i}\cap \CL_{B_{i+1}})
\geq \frac{c_1}{\left(R^m(\sqrt{2}N+\sqrt{d})\right)^{(d-1)}}
=c_2 R^{-m(d-1)}.
\]
Furthermore, since $N=10d+1,$ the constant $c_2$ depends only
on $d$. Using that $1-e^{-x} \geq x/2$ for $x$ small enough, 
we get that 
\[
\BP[B_{i}\leftrightarrow B_{i+1}]
=1-e^{-u\mu_{d,1}(\CL_{B_i}\cap \CL_{B_{i+1}})}
\geq c_3 R^{-m(d-1)},
\]
for $R$ large enough. Here, $c_3$ depends only on $d$ and $u.$ A similar comment applies to all
numbered constants below.

The distance $d(B_i,B_{i+1})$ can be bounded from below by 
\begin{eqnarray*}
\lefteqn{d(q_{i,m},q_{i+1,m})-d(B_i,q_{i,m})-d(q_{i+1,m},B_{i+1})}\\
& & \geq\sqrt{2}NR^m- \sqrt{d}R^m/2-\sqrt{d}R^m/2=R^m(\sqrt{2}N-\sqrt{d}).
\end{eqnarray*}
Since the boxes $B_i$ and $B_{i+1}$ can be covered by a constant
number of balls of radius 1, we get, using Proposition \ref{prop:dballs}, 
that for $R$ large enough,
\[
\mu_{d,1}(\CL_{B_i}\cap \CL_{B_{i+1}})
\leq \frac{c_4}{\left(R^m(\sqrt{2}N-\sqrt{d})\right)^{(d-1)}}
=c_5 R^{-m(d-1)},
\]
so that 
\[
\BP[B_{i}\leftrightarrow B_{i+1}]
=1-e^{-u\mu_{d,1}(\CL_{B_i}\cap \CL_{B_{i+1}})}
\leq c_5 R^{-m(d-1)}.
\]

We proceed by proving \eqref{eqn:boxcylest} for the event $\{\cc_{1,m}\leftrightarrow B_1\}$. Trivially,
$\BP[\cc_{1,m}\leftrightarrow B_1]\leq \BP[\cc_1\leftrightarrow B_1].$ Furthermore, the distance between 
the center of $B_1$ and the centerline $L_1$ of $\cc_1$ is bounded below by
\[
d(o,q_{1,m})-d(q_{1,m},B_1)\geq NR^m-\sqrt{d}R^m/2=R^m(N-\sqrt{d}/2).
\]
We can therefore use Proposition \ref{prop:ball-line1} to conclude that 
\[
\mu_{d,1}(\CL_{\cc_{1}}\cap \CL_{B_1})\leq \frac{c_6}{\left(R^m(N-\sqrt{d}/2)\right)^{d-2}}
=c_7 R^{-m(d-2)}.
\]
Using that $1-e^{-x}\leq x$ for every $x,$ we get that for $R$ large enough 
\[
\BP[\cc_{1,m}\leftrightarrow B_1] \leq 1-e^{-u\mu_{d,1}(\CL_{\cc_1}\cap \CL_{B_{1}})}
\leq c_7 R^{-m(d-2)}.
\]

In order to establish a lower bound of $\BP[\cc_{1,m}\leftrightarrow B_1],$ we will use a 
similar technique to that of the proof of Proposition \ref{prop:ball-line1}.
To that end, 
consider the collection of balls $\CD_m,$ which is the set of balls $D_i\subset \cc_{1,m}$ of radius $1/8$ 
with center $(i,0\ldots,0)$ for $i\in \BZ.$ Much as in the proof of Proposition \ref{prop:ball-line1}, we note that
\[
\bigcup_{D_i\in\CD_m }{\mathcal L}_{D_i}\subset {\mathcal L}_{\cc_{1,m}},
\] 
so that
\begin{equation}\label{eqn:inclusion}
\bigcup_{D_i\in\CD_m}({\mathcal L}_{D_i}\cap {\mathcal L}_{B_1})
\subset {\mathcal L}_{\cc_{1,m}}\cap {\mathcal L}_{B_1}.
\end{equation}
We will now show that
\begin{equation}\label{eqn:setsdisjoint}
({\mathcal L}_{D_i}\cap {\mathcal L}_{B_1})_{D_i\in\CD_m}
\mbox{ is a disjoint collection of sets of lines.}
\end{equation}
Let $i,j\in \CD_m$ where $i\neq j$ and assume that
\begin{equation}\label{eqn:L1}
L\in {\mathcal L}_{D_i} \cap {\mathcal L}_{D_j}
\end{equation}
and 
\begin{equation}\label{eqn:L2}
L\in {\mathcal L}_{D_i}\cap {\mathcal L}_{B_1}.
\end{equation}
As usual, we write $L$ on the form $L=\{t(k_1,...,k_d)\,:\,-\infty<t<\infty\}+v$
for some $v\in {\mathbb R}^d$. As in the proof of Lemma \ref{lemma:geometric1},
by considering the first and fourth coordinate of the intersections
of $L$ with $D_i,D_j$, we observe that if~\eqref{eqn:L1} holds, then
\begin{equation}\label{eqn:k1}
\left|\frac{k_4}{k_1}\right| \leq \frac{2/8}{|i-j|-2/8}\leq  \frac{2/8}{1-2/8}=\frac{1}{3}.
\end{equation}
Similarly, in order for~\eqref{eqn:L2} to be satisfied, then
\begin{equation}\label{eqn:k2}
\left|\frac{k_4}{k_1}\right| \geq\frac{(N-1)R^m}{R^m}=N-1.
\end{equation}
We conclude that as $N=10d+1,$ ~\eqref{eqn:k1} and ~\eqref{eqn:k2} cannot both hold, which proves ~\eqref{eqn:setsdisjoint}.
Furthermore, we note that for any $D_i\in\CD_m,$
\[
d(B_1,D_i)\leq d(B_1,q_{1,m})+d(q_{1,m},o)+d(o,D_i)\leq \sqrt{d}R^m+NR^m+R^m/2,
\]
where we use that $R\geq 2\max_{i=1,\ldots,d} |p_i|.$ Therefore, by Proposition \ref{prop:dballs}
(and the remark that follows it) , we get that 
\begin{equation} \label{eqn:probdballsest}
\BP(D_i \leftrightarrow B_1)\geq \frac{c_8}{\left((N+2)R^m\right)^{d-1}}.
\end{equation}
Proceeding, we have that
\begin{eqnarray*}
\lefteqn{\mu_{d,1}({\mathcal L}_{{\mathfrak c}(L)}\cap {\mathcal L}_{B_1})
\stackrel{~\eqref{eqn:inclusion}}{\geq}
\mu_{d,1}\left(\bigcup_{D_i\in\CD_m}({\mathcal L}_{D_i}\cap {\mathcal L}_{B_1})\right)}\\
& & \stackrel{~\eqref{eqn:setsdisjoint}}{=}
\sum _{D_i\in\CD_m} \mu_{d,1}({\mathcal L}_{D_i}\cap {\mathcal L}_{B_1})
\stackrel{~\eqref{eqn:probdballsest}}{\geq} |\CD_m|\frac{c_8}{\left((N+2)R^m\right)^{d-1}}
\geq c_9R^{-m(d-2)}.
\end{eqnarray*}
The corresponding statement for the event $\{\cc_{2,m}\leftrightarrow B_{d-3}\}$ follows analogously.
\fbox{}\\

We proceed by proving Lemma \ref{lemma:bound}.
The proof itself contains an elementary geometric claim. The claim is very natural, but
nevertheless requires a proof. In order not to disturb the flow of
the proof proper, we will defer the proof of this claim till later. In what follows, we 
write $f(R)=O(R^\alpha)$ iff there exists a constant $C<\infty,$ such that
$|f(R)| \leq C R^{\alpha}$ for all $R$ large enough. In particular, $O(1)$ refers to a 
function which is bounded for all $R$.\\

\noindent
{\em Proof of Lemma \ref{lemma:bound}.}

Fix $m\geq1.$ We will use the second moment method, i.e. that
\[
\BP[X_{R,m}>0]\geq \frac{\BE[X_{R,m}]^2}{\BE[X_{R,m}^2]},
\]
and proceed by bounding $\BE[X_{R,m}^2].$
Letting $\vec{B'}_m:=(B'_{1,m}\ldots,B'_{d-3,m})$ we have that
\begin{equation}\label{eqn:xr2}
\BE[X_{R,m}^2]=
\BE\left[\sum_{\vec{B}_m}\sum_{\vec{B'}_m}
I(\CP(\vec{B}_m))I(\CP(\vec{B'}_m))
\right].
\end{equation}
For fixed $\vec{B}_m,$ we write $\omega$ as $\omega=\eta\cup\xi,$
where $\eta=\eta_{\vec{B}_m}$ is a Poisson process
of intensity measure $u\mu_{d,1}$ on the set
$\left(\CL_{\cc_{1,m}}\cap \CL_{B_{1,m}}\right) \cup \left(\CL_{B_{1,m}}\cap \CL_{B_{2,m}}\right) \cup
\cdots\cup \left(\CL_{B_{d-3,m}}\cap \CL_{\cc_{2,m}}\right)$ and where $\xi=\xi_{\vec{B}_m}$ is an independent 
Poisson process with the same intensity measure on the complement in $A(d,1).$ Furthermore, from now on
the dependence on $m$ will be dropped from the notation in order to avoid it from being 
overly cumbersome. That is, we will write $\vec{B},B_1,B'_1,\ldots$,  
instead of $\vec{B}_m,B_{1,m},B'_{1,m},\ldots$ However, we will keep the notation $\cc_{i,m}$ as dropping the
emphasis on $m$ changes the meaning.

For any $\eta,$ let
$\eta(\cc_{1,m},B_{1}),\eta(B_{1},B_{2}),\ldots,\eta(B_{d-3},\cc_{2,m})$ denote the restrictions
of $\eta$ onto
$\CL_{\cc_{1,m}}\cap \CL_{B_{1}}, \CL_{B_{1}}\cap \CL_{B_{2}},\ldots, \CL_{B_{d-3}}\cap \CL_{\cc_{2,m}}$
respectively.
We define
\[
S_1(B_{1},\eta):=\{B'_{1}\subset B_{R^m}^1: \CL_{B'_{1}}\cap\eta(\cc_{1,m},B_{1})\neq \emptyset\},
\]
\[
S_2(B_{1},\eta):=\{B'_{1}\subset B_{R^m}^1: \CL_{B'_{1}}\cap\eta(B_{1},B_{2})\neq \emptyset\},
\]
and for $i=2,\ldots,d-4,$
\[
S_1(B_{i},\eta):=\{B'_{i}\subset B_{R^m}^i: \CL_{B'_{i}}\cap\eta(B_{i-1},B_{i})\neq \emptyset\},
\]
\[
S_2(B_{i},\eta):=\{B'_{i}\subset B_{R^m}^i: \CL_{B'_{i}}\cap\eta(B_{i},B_{i+1})\neq \emptyset\},
\]
and finally
\[
S_1(B_{d-3},\eta):=\{B'_{d-3}\subset B_{R^m}^{d-3}: \CL_{B'_{d-3}}\cap\eta(B_{d-4},B_{d-3})\neq \emptyset\},
\]
\[ 
S_2(B_{d-3},\eta):=\{B'_{d-3}\subset B_{R^m}^{d-3}: \CL_{B'_{d-3}}\cap\eta(B_{d-3},\cc_{2,m})\neq \emptyset\}.
\]
Furthermore, for fixed $\vec{B},\vec{B'}$ we define
\[
\CS_{\vec{B}}(\vec{B'},\eta):=\{i\in\{1,\ldots,d-4\}:
B'_i\in S_2(B_i,\eta) \textrm{ and } B'_{i+1}\in S_1(B_{i+1},\eta)\}.
\]
Informally, $i \in \CS_{\vec{B}}(\vec{B'},\eta)$ if there exists $L\in \eta$ connecting
$B'_i$ to $B'_{i+1}.$
Furthermore, we let $\CE_{\vec{B}}(\vec{B'},\eta)$ be the subset of $\{B'_1,B'_{d-3}\}$ which includes
$B_1'$ iff $B'_1\in S_1(B_1,\eta)$ and similarly includes $B_{d-3}'$ iff $B'_{d-3}\in S_2(B_{d-3},\eta).$
Informally, $\CE_{\vec{B}}(\vec{B'},\eta)$ includes $B'_1$ iff $B'_1$ intersects the lines in $\eta$
connecting $\cc_{1,m}$ to $B_1,$ so that also $B'_1$ is connected to $\cc_{1,m}.$
Given $\vec{B},\vec{B'}$ and $\eta,$ we let
$N_{\vec{B}}(\vec{B'},\eta):=|\CE_{\vec{B}}(\vec{B'},\eta)|+|\CS_{\vec{B}}(\vec{B'},\eta)|$.
We observe that in the special case $d=4,$ a straightforward adjustment of the above definitions 
is needed, since then we only consider one box $B_{R^m}^1.$ We will make no further comment on this.

Noting that the event $\CP(\vec{B})$ is determined by $\eta$ alone,
we get from \eqref{eqn:xr2} that
\begin{eqnarray*}
\lefteqn{\BE[X_{R,m}^2]=\sum_{\vec{B}}\BE\left[\sum_{\vec{B'}}I(\CP(\vec{B}))I(\CP(\vec{B'}))\right]}\\
& & =\sum_{\vec{B}}\BP[\CP(\vec{B})]
\BE\left[\sum_{\vec{B'}}I(\CP(\vec{B'}))\Bigg{|}\eta\in \CP(\vec{B})\right]\\
& & =\sum_{\vec{B}}\BP[\CP(\vec{B})]
\BE\left[\BE\left[\sum_{\vec{B'}}I(\CP(\vec{B'}))\Bigg{|}\eta\right]\Bigg{|}\eta\in \CP(\vec{B})\right]\\
& & =\sum_{\vec{B}}\BP[\CP(\vec{B})]
\BE\left[
\sum_{\vec{B'}:N_{\vec{B}}(\vec{B'},\eta)=0}\BP[\CP(\vec{B'})\mid\eta]
+\sum_{\vec{B'}:N_{\vec{B}}(\vec{B'},\eta)>0}\BP[\CP(\vec{B'})\mid\eta]
\Bigg{|}\eta\in \CP(\vec{B})\right].
\end{eqnarray*}
Note that when we condition on $\eta,$ the only randomness left is in $\xi.$ 

We observe that for $\eta\in \CP(\vec{B})$ and when $N_{\vec{B}}(\vec{B'},\eta)=0,$
\begin{equation}\label{e.nucond}
\BP[\CP(\vec{B'})\mid\eta]\leq \BP[\CP(\vec{B'})].
\end{equation}
The inequality follows since when $N_{\vec{B}}(\vec{B'},\eta)=0,$ there does not exist $L\in \eta$
such that $L\in \left(\CL_{\cc_{1,m}}\cap \CL_{ B'_1}\right)\cup \left(\CL_{B'_1}\cap \CL_{B'_2}\right)\cup\cdots.$
However, it could be that $\eta$ gives partial knowledge of the {\em absence}
of lines in $\omega$ connecting for instance $B'_1$ to $B'_2$. 
This happens if there exists $L\in \CL_{B_1}\cap \CL_{B_2}$ such that
$L\not \in \eta$ but $L\in \CL_{B'_1}\cap \CL_{B'_2}.$
Continuing, we see that
\begin{eqnarray} \label{eqn:xr2est1}
\lefteqn{\BE[X_{R,m}^2]}\\
& & \stackrel{~\eqref{e.nucond}}{\leq} \sum_{\vec{B}}\BP[\CP(\vec{B})]
\BE\left[
\sum_{\vec{B'}}\BP[\CP(\vec{B'})]
+\sum_{\vec{B'}:N_{\vec{B}}(\vec{B'},\eta)>0}\BP[\CP(\vec{B'})\mid\eta]
\Bigg{|}\eta\in \CP(\vec{B})\right]\nonumber\\
& & =\BE[X_{R,m}]^2+\sum_{\vec{B}}\BP[\CP(\vec{B})]\BE\left[
\sum_{\vec{B'}:N_{\vec{B}}(\vec{B'},\eta)>0}\BP[\CP(\vec{B'})\mid\eta]
\Bigg{|}\eta\in \CP(\vec{B})\right].\nonumber
\end{eqnarray}
We will proceed by analysing and bounding
\[
\BE\left[\sum_{\vec{B'}:N_{\vec{B}}(\vec{B'},\eta)>0}\BP[\CP(\vec{B'})\mid\eta]
\Bigg{|}\eta\in \CP(\vec{B})\right],
\]
for any fixed $\vec{B}.$

For
$k_1,\ldots,k_{d-2}\geq 1,$
using Lemma \ref{lemma:geometric1} and Lemma \ref{lemma:ball-line}, and that the number 
of lines are Poisson distributed,
\begin{eqnarray}\label{eqn:Poiss}
\lefteqn{\BP[\cc_{1,m}\stackrel{k_1}{\leftrightarrow} B_1
\stackrel{k_2}{\leftrightarrow},\ldots,\stackrel{k_{d-3}}{\leftrightarrow}
B_{d-3} \stackrel{k_{d-2}}{\leftrightarrow} \cc_{2,m}
\mid \CP(\vec{B})]}\\
& & =\BP[\cc_{1,m}\stackrel{k_1}{\leftrightarrow} B_1 \mid \cc_1\leftrightarrow B_1 ]
\BP[B_1 \stackrel{k_2}{\leftrightarrow} B_2\mid B_1 \leftrightarrow B_2]\times\cdots
\times \BP[B_{d-3} \stackrel{k_{d-2}}{\leftrightarrow} \cc_{2,m}
\mid B_{d-3} \leftrightarrow \cc_2]\nonumber\\
& & =O(R^{-m(k_1-1)(d-2)})O(R^{-m(k_2-1)(d-1)})\times \cdots \times O(R^{-m(k_{d-2}-1)(d-2)}).\nonumber
\end{eqnarray}
Let 
\[
\CP_{d}(\vec{B}):=\bigcup_{1\leq k_i\leq d \forall i}\{\cc_{1,m}\stackrel{k_1}{\leftrightarrow} B_1
\stackrel{k_2}{\leftrightarrow},\ldots,\stackrel{k_{d-3}}{\leftrightarrow}
B_{d-3} \stackrel{k_{d-2}}{\leftrightarrow} \cc_{2,m}\}.
\]
Using \eqref{eqn:Poiss}, we note that
\[
\BP[\CP_d(\vec{B})^c\mid\CP(\vec{B})]=O(R^{-md(d-2)}).
\]
Therefore, for any positive random variable $Z(\eta)$ bounded above by some finite
number $|Z|,$ we get that
\begin{eqnarray*}
\lefteqn{\BE[Z\mid \eta\in \CP(\vec{B})]}\\
& & =\BE[Z\mid \eta\in \CP_d(\vec{B})]\BP[\CP_d(\vec{B})\mid\CP(\vec{B})]
+\BE[Z\mid \eta\in \CP_d(\vec{B})^c]\BP[\CP_d(\vec{B})^c\mid\CP(\vec{B})]\\
& & \leq \BE[Z\mid \eta\in \CP_d(\vec{B})]+|Z|O(R^{-md(d-2)}).
\end{eqnarray*}
Therefore,
\begin{eqnarray}\label{eqn:xr2est2}
\lefteqn{
\BE\left[\sum_{\vec{B'}:N_{\vec{B}}(\vec{B'},\eta)>0}\BP[\CP(\vec{B'})\mid\eta]
\Bigg{|}\eta\in \CP(\vec{B})\right]}\\
& & \leq \BE\left[\sum_{\vec{B'}:N_{\vec{B}}(\vec{B'},\eta)>0}\BP[\CP(\vec{B'})\mid\eta]
\Bigg{|}\eta\in \CP_d(\vec{B})\right]
+O(R^{-md(d-2)})\sum_{\vec{B'}}1\nonumber\\
& & =O(R^{-md})+\BE\left[\sum_{\vec{B'}:N_{\vec{B}}(\vec{B'},\eta)>0}\BP[\CP(\vec{B'})\mid\eta]
\Bigg{|}\eta\in \CP_d(\vec{B})\right], \nonumber
\end{eqnarray}
since the number of sequences $\vec{B'}$ equals $R^{md(d-3)}.$

Consider now
\begin{equation} \label{eqn:sumeta}
\sum_{\vec{B'}:N_{\vec{B}}(\vec{B'},\eta)>0}\BP[\CP(\vec{B'})\mid\eta]
\end{equation}
for some fixed $\vec{B}$ and $\eta\in \CP_d(\vec{B}).$ As before, after conditioning on $\eta,$
the randomness left is in $\xi.$
In order to get a sufficiently good estimate of \eqref{eqn:sumeta}, we will have to
divide the sum into parts depending on the values of $|\CE_{\vec{B}}(\vec{B'},\eta)|$
and $|\CS_{\vec{B}}(\vec{B'},\eta)|.$ We will then proceed by counting the number
of configurations $\vec{B'}$ corresponding to specific values of $|\CE_{\vec{B}}(\vec{B'},\eta)|$
and $|\CS_{\vec{B}}(\vec{B'},\eta)|,$ and estimate the corresponding probability
$\BP[\CP(\vec{B'})\mid\eta].$ We will start with the latter as that is the easiest part.

Let $l=|\CS_{\vec{B}}(\vec{B'},\eta)|$ and $k=|\CE_{\vec{B}}(\vec{B'},\eta)|.$
Informally, if $k=0,$ then neither of the events $\cc_{1,m} \leftrightarrow B'_1$ and 
$B'_{d-3} \leftrightarrow \cc_{2,m}$ occurs in $\eta.$ If $k=1,$ then exactly one of them occurs in $\eta$, 
while if $k=2,$ both of them occurs in $\eta.$ Similarly, the number of
connections $B'_i \leftrightarrow B'_{i+1}$ that occur in $\eta$ are $l.$ Therefore, in order for
$\CP(\vec{B'})$ to occur, the remaining connections must occur in $\xi.$
Hence, using Lemma \ref{lemma:ball-line}, we get that 
\begin{equation} \label{eqn:probB'bound}
\BP[\CP(\vec{B'})\mid\eta]\leq \Omega(R^{-m(2-k)(d-2)})\Omega(R^{-m(d-4-l)(d-1)}),
\end{equation}
where we made use of Lemma \ref{lemma:geometric1} again. The reason that there is an inequality 
rather than an equality follows much as in \eqref{e.nucond}.

In order to bound the number of configurations $\vec{B'}$ such that $l=|\CS_{\vec{B}}(\vec{B'},\eta)|$
and $k=|\CE_{\vec{B}}(\vec{B'},\eta)|,$ we will use the following claim. \\
\medskip

\noindent {\bf Claim:} For $\eta\in\CP_d(\vec{B})$, $j=1,2$ and $i=1,\ldots,d-3$ we have that
$|S_j(B_i,\eta)|=O(R^m).$ Furthermore, for $N=10d+1,$
$|S_1(B_i,\eta)\cap S_2(B_i,\eta)|=O(1)$ for every $i=1,\ldots,d-3.$

\medskip
This claim is very natural. Consider for instance the box $B_1.$
Since $\eta\in \CP_d(\vec{B})$ there are at least one and at most $d$ lines
in both $\eta(\cc_{1,m},B_1)$ and $\eta(B_1,B_2).$ From this, it follows that there can only be
a linear number (in the sidelength of $B^1_{R^m}$) of other boxes $B'_1$ that 
intersects $\eta(\cc_{1,m},B_1)$ {\em or}
$\eta(B_1,B_2).$ Furthermore, due to the positions of the boxes
$B^1_{R^m}$ and $B^2_{R^m},$ the lines in $\eta(\cc_{1,m},B_1)$ will have a large angle
to the lines in $\eta(B_1,B_2).$ Therefore, there cannot be more than some constant number of
boxes $B'_1 \subset B^1_{R^m}$ that intersects the lines in  $\eta(\cc_{1,m},B_1)$
{\em and} $\eta(B_1,B_2).$

\medskip

We will have to consider the different cases $k=0,1,2$ separately. Therefore, assume first that $k=0.$
Recall that we are only considering $N_{\vec{B}}(\vec{B'},\eta)>0$ and thus $l>0$ when $k=0.$
We have that
\begin{eqnarray} \label{eqn:nmbrofbridges0}
\lefteqn{|\{\vec{B'}:|\CE_{\vec{B}}(\vec{B'})|=0, |\CS_{\vec{B}}(\vec{B'})|=l\}|}\\
& & =O(R^{md(d-3)})O(R^{-m(l+1)(d-1)}R^{-m(l-1)})
=O(R^{m(d^2+2-d(l+4))}). \nonumber
\end{eqnarray}
To see this, consider first $l=1$ and assume that only $(B'_1,B'_2)\in\CS_{\vec{B}}(\vec{B'})$.
Then, $B'_1$ and $B'_2$ must be placed along the line(s) in $\eta$ connecting $B_1$ to $B_2.$
By the claim, the number of ways that the pair $(B'_1,B'_2)$ can be chosen decreases from
$O(R^{2md})$ to $O(R^{2m}),$ thus decreasing the total number of ways that $\vec{B'}$
can be chosen by a factor of $O(R^{2m}/R^{2md})=O(R^{-2m(d-1)}).$ 

Consider now $l=2$ and assume again that $(B'_1,B'_2)\in\CS_{\vec{B}}(\vec{B'}).$ If it is the case that
$(B'_3,B'_4)\in\CS_{\vec{B}}(\vec{B'})$ then the total number of ways that the entire sequence
$\vec{B'}$ can be chosen must be of order $O(R^{md(d-3)})O(R^{-4m(d-1)}).$ However, if instead it
is the case that
$(B'_2,B'_3)\in\CS_{\vec{B}}(\vec{B'}),$ then the total number of ways that the entire sequence
$\vec{B'}$ can be chosen must be of order $O(R^{md(d-3)})O(R^{-3m(d-1)})O(R^{-m}).$
The first two factors are explained as above, while the third factor reflects that the box
$B'_2$ must in fact belong to a collection of at most constant size (again using the claim). Continuing in the
same way gives \eqref{eqn:nmbrofbridges0}.

Hence, we conclude, using \eqref{eqn:probB'bound} and \eqref{eqn:nmbrofbridges0}, that
\begin{eqnarray*}
\lefteqn{\sum_{\vec{B'}:|\CE_{\vec{B}}(\vec{B'})|=0, |\CS_{\vec{B}}(\vec{B'})|=l}\BP[\CP(\vec{B'})\mid\eta]}\\
& & = O(R^{m(d^2+2-d(l+4))})O(R^{-2m(d-2)})O(R^{-m(d-4-l)(d-1)})
=O(R^{m(2-l-d)}),
\end{eqnarray*}
so that
\begin{equation} \label{eqn:kzerosuml}
\sum_{l=1}^{d-4}\sum_{\vec{B'}:|\CE_{\vec{B}}(\vec{B'})|=0, |\CS_{\vec{B}}(\vec{B'})|=l}
\BP[\CP(\vec{B'})\mid\eta]
=O(R^{m(1-d)}).
\end{equation}

Consider now $k=1$ and assume without loss of generality that $\CE_{\vec{B}}(\vec{B'})=\{B'_1\}.$ 
The number of sequences $\vec{B'}$
satisfying $l=0,$ is of course $O(R^{md(d-3)})O(R^{-m(d-1)}).$ Furthermore, arguing as in the case
$k=0,$ we see that
\begin{eqnarray} \label{eqn:nmbrofbridges1}
\lefteqn{|\{\vec{B'}:|\CE_{\vec{B}}(\vec{B'})|=1, |\CS_{\vec{B}}(\vec{B'})|=l\}|}\\
& & =O(R^{md(d-3)})O(R^{-m(l+1)(d-1)}R^{-ml})
=O(R^{m(d^2+1-d(l+4))}). \nonumber
\end{eqnarray}
Therefore, using \eqref{eqn:probB'bound} and \eqref{eqn:nmbrofbridges1},
\begin{eqnarray*}
\lefteqn{\sum_{\vec{B'}:|\CE_{\vec{B}}(\vec{B'})|=1, |\CS_{\vec{B}}(\vec{B'})|=l}\BP[\CP(\vec{B'})\mid\eta]}\\
& & =O(R^{m(d^2+1-d(l+4))})O(R^{-m(d-2)})O(R^{-m(d-4-l)(d-1)})
=O(R^{-m(l+1)}),
\end{eqnarray*}
so that
\begin{equation} \label{eqn:konesuml}
\sum_{l=0}^{d-4}\sum_{\vec{B'}:|\CE_{\vec{B}}(\vec{B'})|=1, |\CS_{\vec{B}}(\vec{B'})|=l}
\BP[\CP(\vec{B'})\mid\eta]
= O(R^{-m}).
\end{equation}

Finally, we consider $k=2.$ The number of sequences $\vec{B'}$ satisfying $l=0,$ is of course
$O(R^{md(d-3)})O(R^{-2m(d-1)}).$ Furthermore, arguing as in the other cases,
we see that for $l<d-4,$
\begin{eqnarray} \label{eqn:nmbrofbridges2}
\lefteqn{|\{\vec{B'}:|\CE_{\vec{B}}(\vec{B'})|=2, |\CS_{\vec{B}}(\vec{B'})|=l\}|}\\
& & = O(R^{md(d-3)})O(R^{-m(l+2)(d-1)}R^{-ml})
=O(R^{m(d^2+2-d(l+5))}). \nonumber
\end{eqnarray}
Using \eqref{eqn:probB'bound} and \eqref{eqn:nmbrofbridges2}
\begin{eqnarray*}
\lefteqn{\sum_{\vec{B'}:|\CE_{\vec{B}}(\vec{B'})|=2, |\CS_{\vec{B}}(\vec{B'})|=l}\BP[\CP(\vec{B'})\mid\eta]}\\
& & = O(R^{m(d^2+2-d(l+5))})O(R^{-m(d-4-l)(d-1)})
=O(R^{-m(l+2)}).
\end{eqnarray*}
Furthermore,
\begin{eqnarray*}
\lefteqn{|\{\vec{B'}:|\CE_{\vec{B}}(\vec{B'})|=2, |\CS_{\vec{B}}(\vec{B'})|=d-4\}|}\\
& & = O(R^{md(d-3)})O(R^{-m(d-3)(d-1)}R^{-m(d-3)})
=O(1), \nonumber
\end{eqnarray*}
so that
\begin{equation} \label{eqn:ktwosuml}
\sum_{l=0}^{d-4}\sum_{\vec{B'}:|\CE_{\vec{B}}(\vec{B'})|=2, |\CS_{\vec{B}}(\vec{B'})|=l}
\BP[\CP(\vec{B'})\mid\eta]
= O(1)+\sum_{l=0}^{d-5}O(R^{-m(l+2)})=O(1).
\end{equation}
Combining \eqref{eqn:kzerosuml},\eqref{eqn:konesuml} and \eqref{eqn:ktwosuml}, we get that
\begin{equation} \label{eqn:xr2est3}
\sum_{\vec{B'}:N_{\vec{B}}(\vec{B'},\eta)>0}\BP[\CP(\vec{B'})\mid\eta] =O(1).
\end{equation}
Combining \eqref{eqn:xr2est1},\eqref{eqn:xr2est2} and \eqref{eqn:xr2est3} we 
see that there exists a constant $C$ such that for all $R$ large enough,
\[
\BE[X_{R,m}^2]\leq \BE[X_{R,m}]^2+C\BE[X_{R,m}].
\]
Furthermore, by 
Lemma \ref{lemma:ball-line} there exists a constant $C'>0$ such that for every $\vec{B},$
$\BP[\CP(\vec{B})]\geq C'R^{-2m(d-2)}R^{-m(d-4)(d-1)}$.
Therefore,
\[
\BE[X_{R,m}]\geq R^{md(d-3)}C'R^{-2m(d-2)}R^{-m(d-4)(d-1)}=C',
\]
so that
\[
\frac{\BE[X_{R,m}]^2}{\BE[X_{R,m}^2]}
\geq \frac{\BE[X_{R,m}]^2}{\BE[X_{R,m}]^2+C\BE[X_{R,m}]}
=\frac{1}{1+C\BE[X_{R,m}]^{-1}}
\geq \frac{1}{1+C/C'}>0,
\]
for all $R$ large enough. This proves the statement.
\fbox{}\\

\noindent
We will now prove the claim.\\

\noindent
{\bf Proof of Claim.} Recall that $\eta\in \CP_d(\vec{B}),$ so that
the first part, i.e. $|S_j(B_i,\eta)|=O(R^m)$ follows from the fact that the number of
lines in $\eta(\cc_{1,m},B_1),\eta(B_1,B_2),\ldots$ are bounded by $d.$

Let $R$ be so large that $R/2>\max_{i=3,\ldots,d}|p_i|+1.$
Consider first any pair of lines $L_1,L_2$
such that $L_1\in \CL_{\cc_{1,m}}\cap \CL_{B_1}$ and $L_2\in \CL_{B_1}\cap \CL_{B_2}$
where $B_1\subset B_{R^m}^1$ and $B_2\subset B_{R^m}^2.$ Let $x\in L_1\cap \cc_{1,m}$ and
$x'\in L_1\cap B_1,$ and note that $v_1=x-x'$ is a directional vector for $L_1.$ In the same way,
letting $y\in L_2\cap B_1$ and $y'\in L_2\cap B_2,$ $v_2=y-y'$ becomes a directional vector 
for $L_2.$
Furthermore, we can write
$x'=q_{1,m}+\alpha$ where $\alpha\in[-R^m/2,R^m/2]^{d},$
$y=q_{1,m}+\alpha+\gamma$ where $\gamma \in [-1,1]^d$
and $y'=q_{2,m}+\beta$ where $\beta\in[-R^m/2,R^m/2]^{d}.$

Considering the angle $\theta$ between the lines $L_1$ and $L_2,$ we have that
\begin{equation} \label{eqn:angle1}
\cos \theta =\frac{\langle v_1,v_2\rangle}{|v_1| |v_2|}.
\end{equation}
By showing that $|\cos \theta|$ is uniformly bounded away from 1 when $N=10d+1$ and $R$ is large,
the second part of the claim is established.
We note that
\[
|v_1|^2=|p+NR^me_{4}+\alpha-x|
 \geq |\alpha_4+p_4+NR^m-x_4|^2
\geq (N-1)^2R^{2m},
\]
and similarly,
\[
|v|^2=|q_{2,m}+\beta-q_{1,m}-\alpha-\gamma|^2\geq |\beta_4-NR^m-\alpha_4-\gamma_4|^2
+|\beta_5+NR^m-\alpha_5-\gamma_5|^2
\geq 2((N-1)R^m-1)^2.
\]
Furthermore,
\begin{eqnarray*}
\lefteqn{\langle v_1,v_2\rangle=\langle q_{1,m}+\alpha-x,q_{2,m}+\beta-q_{1,m}-\alpha-\gamma \rangle}\\
& & =\sum_{i\neq 4,5}(p_i+\alpha_i-x_i)(\beta_i-\alpha_i-\gamma_i)
+(p_4+NR^m+\alpha_4-x_4)(\beta_4-\alpha_4-NR^m-\gamma_4)\\
& & \ \ \ \ +(p_5+\alpha_5-x_5)(\beta_5+NR^m-\alpha_5-\gamma_5),
\end{eqnarray*}
so that using $|x_1|\leq R^m/2$ and $|x_i|\leq 1$ for $i\neq 1,$
\begin{eqnarray*}
\lefteqn{|\langle v_1,v_2\rangle|}\\
& & \leq 2R^m(R^m+1)+(d-3)(R^m+1)^2+((N+1)R^m+1)^2+(R^m+1)((N+1)R^m+1) \\
& & =R^{2m}(d+(N+1)^2+N)+O(R^m).
\end{eqnarray*}
Therefore, we get from \eqref{eqn:angle1} and that $N=10d+1,$ 
\begin{eqnarray} \label{eqn:angle2}
\lefteqn{|\cos \theta|\leq
\frac{R^{2m}(d+(N+1)^2+N)+O(R^{m})}
{(N-1)R^{m}\sqrt{2}((N-1)R^m-1)}}\\
& & =\frac{(d+(10d+2)^2+10d+1)+O(R^{-m})}
{10d\sqrt{2}(10d-O(R^{-m}))} \nonumber\\
& & =\frac{(100d^2+51d+5)+O(R^{-m})}
{100d^2\sqrt{2}} \left(1+O(R^{-m})\right).\nonumber
\end{eqnarray}
Since $(100d^2+51d+5)/100d^2$ is decreasing in $d,$ we can estimate the RHS by inserting $d=4$
in which case we get the bound
\begin{equation} \label{eqn:angleest1}
\frac{1809}{1600\sqrt{2}}\left(1+O(R^{-m})\right)\leq \frac{1.14}{\sqrt{2}}\left(1+O(R^{-m})\right),
\end{equation}
which is uniformly bounded away from 1 for $R$ large enough.
Therefore, the angle between $L_1$ and $L_2$ must be uniformly (in $d$ and in the choice of
$L_1,L_2$) bounded away from 0 for all $R$ large enough. From this, the claim follows for all such
$L_1$ and $L_2.$

The remaining cases (i.e. when $L_2\in \CL_{B_1}\cap \CL_{B_2}$ and $L_3\in \CL_{B_2}\cap \CL_{B_3}$ etc)
are handled in the same way. The final case when
$L_{d-3}\in \CL_{B_{d-4}}\cap \CL_{B_{d-3}}$ and $L_{d-2}\in \CL_{B_{d-3}}\cap \cc_{2,m}$ is
somewhat {\em more} technical than the current case due to the position of the cylinder
$\cc_2.$ However, the approach is completely analogous.
\fbox{}\\


\end{document}